\documentclass[a4paper,twoside]{article}

\oddsidemargin -0.3cm
\evensidemargin -0.3cm
\textwidth 16cm
\textheight 20.5cm
\usepackage{amsthm,amsmath,amssymb}
\usepackage[bookmarks=true,bookmarksopen=true]{hyperref}

\usepackage{xspace}

\numberwithin{equation}{section}
\theoremstyle{plain}
\newtheorem{thrm}{Theorem}[section]
\newtheorem{lmm}[thrm]{Lemma}
\newtheorem{crllr}[thrm]{Corollary}

\theoremstyle{definition}
\newtheorem{dfntn}[thrm]{Definition}

\theoremstyle{remark}

\newcommand{\Pb}{\mbox{\rm (P)}\xspace}
\newcommand{\uad}{U_{\rm ad}}
\renewcommand{\div}{\operatorname{div}}

\title{Analysis of control problems of nonmontone semilinear elliptic equations\thanks{The first two authors were partially supported by Spanish Ministerio de Econom\'{\i}a y Competitividad under research project MTM2017-83185-P.}}

\author{Eduardo Casas\thanks{Departmento de Matem\'{a}tica Aplicada y Ciencias de la Computaci\'{o}n, E.T.S.I. Industriales y de Telecomunicaci\'on, Universidad de Cantabria, 39005 Santander, Spain, {\tt eduardo.casas@unican.es}.}
\and
Mariano Mateos\thanks{Departamento de Matem\'{a}ticas, Campus de Gij\'on, Universidad de Oviedo, 33203, Gij\'on, Spain, {\tt mmateos@uniovi.es}.}
\and
Arnd R\"osch\thanks{Fakult\"at f\"ur Mathematik, Universt\"at Duisburg-Essen, D-45127 Essen, Germany,
{\tt arnd.roesch@uni-due.de}}
}

\pagestyle{myheadings} \thispagestyle{plain} \markboth{E.~Casas, M.~Mateos and A.~R\"osch}{Control of Nonmonotone Elliptic Equations}

\begin{document}

\maketitle

\begin{abstract}
In this paper we study optimal control problems governed by a semilinear elliptic equation. The equation is nonmonotone due to the presence of a convection term, despite the monotonocity of the nonlinear term. The resulting operator is neither monotone nor coervive. However, by using conveniently a comparison principle we prove existence and uniqueness of solution for the state equation. In addition, we prove some regularity of the solution and differentiability of the relation control-to-state. This allows us to derive first and second order conditions for local optimality.
\end{abstract}

\begin{quote}
\textbf{Keywords:}
optimal control,  semilinear partial differential equation, optimality conditions
\end{quote}
\begin{quote}
\textbf{AMS Subject classification: }
35J61, %Semilinear elliptic equations
49J20, %Existence theories; optimal control problems involving pde
49K20 %Optimal control; Problems involving partial differential equations
\end{quote}

\section{Introduction}
\label{S1}
In this paper, we consider an optimal control problem associated with the following elliptic semilinear equation
\begin{equation}
\left\{\begin{array}{l} Ay + b(x)\cdot\nabla y + f(x,y) = u \text{ in } \Omega,\\ y = 0\text{ on } \Gamma,\end{array}\right.
\label{E1.1}
\end{equation}
where $A$ is an elliptic operator, $b:\Omega \longrightarrow \mathbb{R}^n$ is a given function, $f:\Omega \times \mathbb{R} \longrightarrow \mathbb{R}$ is nondecreasing monotone in the second variable, $u \in L^2(\Omega)$, $\Omega$ is a domain in $\mathbb{R}^n$, $n= 2$ or 3, and $\Gamma$ is the boundary of $\Omega$. The precise assumptions on these data will be given in the next section. Due to the convection term induced by $b$, the linear part of the above operator is nonmonote. We emphasize that here we neither assume that $\div b = 0$ nor $b$ is small. Consequently, the bilinear form associated with the linear part of the operator is not necessarily  coercive. This introduces some important difficulties in the analysis of the equation. A thorough study is needed to prove existence and  uniqueness of a solution of the equation \eqref{E1.1} for every $u$. This study makes a strong use of a comparison principle.

In many publications $\div b=0$ is assumed.
This property is satisfied in several applications, for instance if the quantity $b$ represents a velocity field of an incompressible Navier-Stokes flow. If the flow is compressible, the assumption $\div b=0$ cannot be justified.
Some examples of applications where the divergence of the convection term need not be zero can be found in the introductory chapter of the book \cite{Banks-Kunisch1989}.

 In many books devoted to partial differential equations, the convection term appears and it is not assumed to have zero divergence. Let us mention the classical books \cite{Evans1998} or \cite{Lad-Ural-elliptic1968}; see also \cite{Stampacchia65}. However, only a few references treat the topic of existence and uniqueness of solution for linear elliptic equations with convection term such that $\div b \ne 0$ and $b$ is not small. When the nonlinear term $f$ is not present in the state equation, the reader is referred to the early reference \cite{Trudinger-1973} for the existence and uniqueness of a solution; see also \cite[Theorem 8.3]{Gilbarg-Trudinger83} or the recent reference \cite{Boccardo2019COCV}. In \cite{Boccardo2019BUMI}, the case of a semilinear equation in dimension $n \ge 3$ with the non-linearity $y|y|^{\lambda -1}$, $\lambda > \frac{n}{n - 2}$, is studied. As far as we know, the most general and complete results for the analysis of equation \eqref{E1.1} are the ones presented below in Section \ref{S2}.

 The case of nonmonotone quasilinear elliptic equations was considered in \cite{Casas-Troltzsch2009} and \cite{Krizek-Liu2003}. However, in the last two papers, the operator was coercive. The equation considered in this paper does not fit in the problems studied in the mentioned references.

Associated with the state equation \eqref{E1.1} we consider the following control problem:
\[
\Pb\quad \min_{u \in \uad} J(u) := \int_\Omega L(x,y_u(x))\, dx + \frac{\nu}{2}\int_\Omega u^2(x)\, dx
\]
where $y_u$ is the solution of \eqref{E1.1} associated with $u$, $L:\Omega \times \mathbb{R} \longrightarrow \mathbb{R}$ is a given function, $\nu > 0$, and
\[
\uad = \{u \in L^2(\Omega) : \alpha \le u(x) \le \beta \text{ for a.a. } x \in \Omega\}
\]
with $-\infty \le \alpha < \beta \le +\infty$. A precise analysis of the state equation allows us to prove the existence of a solution for \Pb as well as to get the first and second order optimality conditions.

Typical examples of nonlinearities in the state equations are $f(x,y) = a_0(x)|y|^ry$ with $r > 0$ or $f(x,y) = a_0(x)\exp(y)$, where $a_0$ is assumed to be nonnegative and bounded. The assumption $r > 1$ is needed to prove the existence of a second derivative of $J$. Concerning the functional $J$, the usual tracking cost functional falls into this framework by setting $L(x,y) = \frac{1}{2}(y - y_d(x))^2$ for some fixed function $y_d \in L^2(\Omega)$.

To our best knowledge, this is the first time that a control problem governed by a nonmonotone and noncoercive equation of the kind described here has been considered. The methods to study the control problem are technically more involved than those used for problems governed by coercive equations, as it can be seen not only in the study of the state equation, but also in the proofs of some results such as Lemma \ref{L3.5} or Theorem \ref{T3.8}.

The paper is organized as follows. In section \ref{S2}, the state equation is analyzed. We address the issues of existence, uniqueness and regularity results of the solution for both the linear and semilinear cases. Differentiability of the relation control-to-state is also established. Finally, the existence of solution for \Pb as well as first and second order optimality conditions are proved in Section \ref{S3}. Based on the results established in this paper, the numerical analysis for \Pb will be carried out in a forthcoming paper.

\section{Analysis of the state equation}
\label{S2}
\setcounter{equation}{0}

In this section we study the equation \eqref{E1.1} proving some results that will be used in the analysis of the control problem \Pb. Before studying \eqref{E1.1}, we analyze a linear equation involving the convection term. The section is divided into two subsections. The first one is devoted to the linear equation and the second to the study of \eqref{E1.1}

\subsection{Study of the linear operator}
\label{S2.1}
The following assumption is needed for this analysis.

\begin{quotation}
\textit{Assumption 1.} $\Omega$ is an open domain in $\mathbb{R}^n$, $n = 2$ or 3, with a Lipschitz boundary $\Gamma$. $A$ is the operator given by
\[
Ay= - \sum_{i,j = 1}^n\partial_{x_j}(a_{ij}(x)\partial_{x_i}y) \ \text{ with }\ a_{ij} \in L^\infty(\Omega),
\]
and satisfying the following ellipticity condition:
\[
\exists \Lambda > 0 \text{ such that } \sum_{i,j = 1}^na_{ij}(x)\xi_i\xi_j \ge \Lambda|\xi|^2\ \ \forall \xi \in \mathbb{R}^n \text{ and for a.a. } x \in \Omega.
\]
The function  $b:\Omega\to\mathbb{R}^n$ satisfies  $b\in L^p(\Omega)^n$ with $p\geq 3$ if $n=3$ and $p > 2$ if $n=2$. For the function $a_0:\Omega\to\mathbb{R}$ it is assumed that $a_0 \in L^q(\Omega)$ with $q > 1$ if $n = 2$ and $q \ge \frac{3}{2}$ if $n = 3$.

\hspace{-\parindent}Unless stated otherwise, in the rest of the paper $\bar p$ will denote a number such that $\bar p > n/2$.
\end{quotation}

\medskip

Notice that with this choice, $L^{\bar p}(\Omega)\subset W^{-1,r}(\Omega)\subset H^{-1}(\Omega)$ for some $r>n$.

Along this paper we will take
\[
\|y\|_{H_0^1(\Omega)} = \left(\int_\Omega|\nabla y(x)|^2\, dx\right)^{\frac{1}{2}}.
\]
From the Poincar\'e inequality and the Sobolev embedding theorem, we know that there exist two constants $C_\Omega$ and $K_\Omega$ such that
\begin{equation}
\label{E2.1}\|y\|_{L^2(\Omega)} \le C_\Omega\|y\|_{H_0^1(\Omega)}\quad\mbox{ and }\quad \|y\|_{L^6(\Omega)} \le K_\Omega\|y\|_{H_0^1(\Omega)}\quad \forall y \in H_0^1(\Omega).
\end{equation}
As a consequence, we have that $\|y\|_{H^{-1}(\Omega)} \le C_\Omega\|y\|_{L^2(\Omega)}$ for all $y \in L^2(\Omega)$.

 Let us consider the elliptic operator
\begin{equation}
\mathcal{A}y = Ay + b(x)\cdot\nabla y + a_0(x) y \ \text{ with } a_0 \ge 0.
\label{E2.2}
\end{equation}

We first prove continuity of this operator and G\r{a}rding's inequality.
\begin{lmm}\label{L2.1}
Under Assumption 1 we have that
$\mathcal{A}\in \mathcal{L}(H^1_0(\Omega),H^{-1}(\Omega))$ and there exists a constant $C_{\Lambda,b}$ such that
\begin{equation}
\langle \mathcal{A} z,z\rangle_{H^{-1}(\Omega),H^1_0(\Omega)}  \ge \frac{\Lambda}{4}\|z\|^2_{H_0^1(\Omega)} - C_{\Lambda,b}\|z\|^2_{L^2(\Omega)}\quad \forall z \in H_0^1(\Omega).
\label{E2.3}
\end{equation}
\end{lmm}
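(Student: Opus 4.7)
The plan is to treat the three summands $Ay$, $b\cdot\nabla y$, and $a_0 y$ separately, for both the continuity and the coercivity (Gårding) claim, and to reduce everything to Hölder's inequality combined with the Sobolev embeddings already stated in \eqref{E2.1}. The critical point of the argument will be the convection term in dimension $n=3$ with $p=3$, where the embedding $H_0^1(\Omega)\hookrightarrow L^6(\Omega)$ is just barely strong enough, and we will overcome it by a cut-off/approximation of $b$ in $L^p$.

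For the continuity $\mathcal{A}\in\mathcal{L}(H^1_0(\Omega),H^{-1}(\Omega))$, I would fix $y,z\in H^1_0(\Omega)$ and bound $\langle\mathcal{A}y,z\rangle$ term by term. The principal part is immediate from $a_{ij}\in L^\infty(\Omega)$. For the convection term, Hölder yields $|\int_\Omega b\cdot\nabla y\,z\,dx|\le \|b\|_{L^p}\|\nabla y\|_{L^2}\|z\|_{L^{2p/(p-2)}}$, and the conjugate exponent $2p/(p-2)$ is exactly $6$ when $n=3$, $p=3$, and is finite when $n=2$, $p>2$, so Sobolev covers both cases. For the zero-order term, Hölder with three factors gives $|\int_\Omega a_0 y\,z\,dx|\le \|a_0\|_{L^q}\|y\|_{L^s}\|z\|_{L^s}$ with $s=2q/(q-1)$; the assumptions $q\ge 3/2$ for $n=3$ and $q>1$ for $n=2$ give $s\le 6$ and $s<\infty$ respectively, again within the Sobolev range.

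For the Gårding inequality, the ellipticity of $A$ gives $\langle Az,z\rangle\ge\Lambda\|z\|_{H^1_0(\Omega)}^2$, and $\int_\Omega a_0 z^2\,dx\ge 0$ since $a_0\ge 0$, so the only term that can fail to be nonnegative is the convection term $\int_\Omega b\cdot\nabla z\,z\,dx$, which cannot be handled by integration by parts because $\operatorname{div} b$ is not controlled. The idea is to split $b=b_\varepsilon+(b-b_\varepsilon)$ with $b_\varepsilon\in L^\infty(\Omega)^n$ and $\|b-b_\varepsilon\|_{L^p(\Omega)}\le\varepsilon$, which is possible by density of bounded functions in $L^p(\Omega)$ for $p<\infty$. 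For the first piece, Young's inequality gives
\[
\Bigl|\int_\Omega b_\varepsilon\cdot\nabla z\,z\,dx\Bigr|\le \|b_\varepsilon\|_{L^\infty}\|\nabla z\|_{L^2}\|z\|_{L^2}\le \frac{\Lambda}{8}\|z\|_{H^1_0}^2+\frac{2\|b_\varepsilon\|_{L^\infty}^2}{\Lambda}\|z\|_{L^2}^2,
\]
where the $\Lambda/8$ comes from the standard quadratic Young's inequality. For the remainder, the Hölder estimate of the previous paragraph yields $|\int_\Omega(b-b_\varepsilon)\cdot\nabla z\,z\,dx|\le \varepsilon\,C_{\rm emb}\|z\|_{H^1_0}^2$, where $C_{\rm emb}$ is the norm of the embedding $H_0^1(\Omega)\hookrightarrow L^{2p/(p-2)}(\Omega)$ (so $C_{\rm emb}=K_\Omega$ in the critical case $n=3$, $p=3$). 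Choosing $\varepsilon$ small enough so that $\varepsilon C_{\rm emb}\le \Lambda/8$, and recalling that only $b_\varepsilon$ enters the constant in front of $\|z\|_{L^2}^2$, the three contributions combine to
\[
\langle\mathcal{A}z,z\rangle\ge\Bigl(\Lambda-\frac{\Lambda}{8}-\frac{\Lambda}{8}\Bigr)\|z\|_{H^1_0}^2-C_{\Lambda,b}\|z\|_{L^2}^2\ge\frac{\Lambda}{4}\|z\|_{H^1_0}^2-C_{\Lambda,b}\|z\|_{L^2}^2,
\]
which is the claimed bound.

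The main obstacle is exactly the low regularity of $b$: neither $\operatorname{div} b=0$ nor a smallness assumption is available, so one cannot directly absorb the convection term. The approximation argument above is the natural way around it, and it works uniformly in the two critical exponents $(n,p)=(3,3)$ and $(n,q)=(3,3/2)$ because $L^\infty(\Omega)$ is dense in $L^p(\Omega)$ for every $p<\infty$ and because in these critical cases the Hölder/Sobolev estimates close at the exponent $6$, which is exactly the Sobolev exponent of $H_0^1(\Omega)$ in dimension three.
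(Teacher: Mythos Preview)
Your argument is correct. For continuity and for the G\r{a}rding inequality in dimension $n=3$ you do exactly what the paper does: the paper also splits $b=b'+b''$ with $b'\in L^\infty(\Omega)^n$ and $\|b''\|_{L^3}$ small (citing Stampacchia for this decomposition, which is nothing but the density of bounded functions in $L^p$ that you invoke), absorbs the $b''$-piece via the critical Sobolev embedding $H_0^1\hookrightarrow L^6$, and controls the $b'$-piece by Young's inequality at the cost of an $L^2$ term.

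The only genuine difference is the case $n=2$. You apply the \emph{same} decomposition argument there, using that $H_0^1(\Omega)\hookrightarrow L^{2p/(p-2)}(\Omega)$ continuously for any finite exponent in two dimensions. The paper instead bounds the full convection term by $\frac{1}{2\Lambda}\|b\|_{L^p}^2\|z\|_{L^{2p/(p-2)}}^2$ and then invokes Lions' (Ehrling) lemma for the chain $H_0^1(\Omega)\subset L^{2p/(p-2)}(\Omega)\subset L^2(\Omega)$, which is legitimate because for $n=2$ the first embedding is compact. Your route is more uniform across dimensions and avoids the compactness argument; the paper's route for $n=2$ has the minor advantage of producing an explicit expression for $C_{\Lambda,b}$ in terms of $\|b\|_{L^p}$ and the Ehrling constant, but neither approach gives a truly computable constant. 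Both are valid; they simply reflect that in two dimensions the embedding is subcritical, so one has an extra tool (compactness) that you do not need.
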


\begin{proof}
Let us show that $\mathcal{A}$ is a linear continuous operator. We will prove the result for dimension $n = 3$ and we can argue in a similar way for dimension $n = 2$. It is obvious that $A:H_0^1(\Omega) \longrightarrow H^{-1}(\Omega)$ is a continuous linear mapping due to the fact that $a_{ij} \in L^\infty(\Omega)$. Moreover, from \eqref{E2.1} and H\"older inequality we infer for every $z \in H_0^1(\Omega)$
\begin{align*}
&\|b\cdot\nabla z\|_{L^{\frac{6}{5}}(\Omega)} \le \|b\|_{L^3(\Omega)^3}\|\nabla z\|_{L^2(\Omega)^3} = \|b\|_{L^3(\Omega)^3}\|z\|_{H_0^1(\Omega)},\\
&\|a_0 z\|_{L^{\frac{6}{5}}(\Omega)} \le \|a_0\|_{L^{\frac{3}{2}}(\Omega)}\|z\|_{L^6(\Omega)} \le K_\Omega\|a_0\|_{L^{\frac{3}{2}}(\Omega)}\|z\|_{H_0^1(\Omega)}.
\end{align*}
Hence, we have that $\mathcal{A}$ is a well-posed linear and continuous operator.

Let us prove \eqref{E2.3}.  In this case, the proofs for $n=3$ and $n=2$ are slightly different. We start with $n=2$. Using that $a_0\geq 0$ and Young and H\"older inequalities we get
\begin{align*}
\langle \mathcal{A} z,z\rangle_{H^{-1}(\Omega),H^1_0(\Omega)} \ge& \Lambda\|\nabla z\|^2_{L^2(\Omega)^n} - \|\nabla z\|_{L^2(\Omega)^n}\|bz\|_{L^2(\Omega)^{n}} \ge \frac{\Lambda}{2}\|\nabla z\|^2_{L^2(\Omega)^n} - \frac{1}{2\Lambda}\|bz\|^2_{L^2(\Omega)^{n}}\\
 \ge &\frac{\Lambda}{2}\|\nabla z\|^2_{L^2(\Omega)^n} - \frac{1}{2\Lambda}\|b\|^2_{L^{p}(\Omega)^n}\|z\|^2_{L^{\frac{2 p}{p - 2}}(\Omega)}.
\end{align*}
Observe that the assumption $p > 2$ implies that $2 < \frac{2 p}{p - 2} < \infty$ if $n = 2$. Now, we apply Lions' Lemma, \cite[Chapter 2, Lemma 6.1]{Necas67}, to the spaces $H_0^1(\Omega) \subset L^{\frac{2 p}{ p - 2}}(\Omega) \subset L^2(\Omega)$ to deduce the existence of a constant $C_0$ depending of $\Lambda$ and $\|b\|_{L^{p}(\Omega)^n}$ such that
\[
\|z\|_{L^{\frac{2p}{p - 2}}(\Omega)} \le \frac{\Lambda}{2\|b\|_{L^{p}(\Omega)^n}}\|\nabla z\|_{L^2(\Omega)^n} + C_0\|z\|_{L^2(\Omega)}.
\]
From the last two inequalities we conclude \eqref{E2.3}
with
\[
C_{\Lambda,b} = \frac{C_0^2\|b\|^2_{L^{p}(\Omega)^n}}{\Lambda}.
\]
For $n=3$ we proceed as follows. From \cite[Lemma 3.1]{Stampacchia65}, we know that, for any $\varepsilon >0 $ there exists a constant $K_{\varepsilon,b}>0$ depending on $b$ and $\varepsilon$ such that $b = b'+b''$, with $\|b'\|_{L^\infty(\Omega)^{n}} < K_{\varepsilon,b} $ and $\|b''\|_{L^3(\Omega)^{n}} < \varepsilon$. Taking $\varepsilon = \Lambda/(4 K_\Omega)$, $K_\Omega$ satisfying \eqref{E2.1}, and using that $a_0\geq 0$, H\"older and Young inequalities and \eqref{E2.1}, we obtain
\begin{align*}
\langle \mathcal{A} z,z\rangle_{H^{-1}(\Omega),H^1_0(\Omega)} \ge& \Lambda\|\nabla z\|^2_{L^2(\Omega)^n} - \int_\Omega(b'+b'')\cdot\nabla z z dx \\
    \ge &  \Lambda|\nabla z\|^2_{L^2(\Omega)^n} - \|b'\|_{L^\infty(\Omega)^{n}}\|\nabla z\|_{L^2(\Omega)^{n}}\|z\|_{L^2(\Omega)} -
    \|b''\|_{L^3(\Omega)^{n}}\|\nabla z\|_{L^2(\Omega)^{n}}\|z\|_{L^6(\Omega)}  \\
 \ge &\Lambda\|\nabla z\|^2_{L^2(\Omega)^n} - \frac{\Lambda}{2}\|\nabla z\|_{L^2(\Omega)^{n}}^2 - \frac{K_{\varepsilon,b}^2}{2\Lambda}\|z\|_{L^2(\Omega)}^2
 - \frac{\Lambda}{4}\|\nabla z\|_{L^2(\Omega)^{n}}^2
\end{align*}
and \eqref{E2.3} follows with a constant $C_{\Lambda,b} = K_{\varepsilon,b}^2/(2\Lambda)$.
\end{proof}

\begin{thrm}
Under Assumption 1, the linear operator $\mathcal{A}:H_0^1(\Omega) \longrightarrow H^{-1}(\Omega)$ is an isomorphism.
\label{T2.2}
\end{thrm}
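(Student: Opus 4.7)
The plan is to combine the Gårding inequality with Fredholm alternative, reducing the problem to proving injectivity, which will be handled via a weak maximum/comparison principle.

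\textbf{Step 1: Coercivity after a shift.} First I would set $\lambda_0 := C_{\Lambda,b}$ and consider the shifted operator $\mathcal{A}_{\lambda_0} := \mathcal{A} + \lambda_0 I : H_0^1(\Omega) \to H^{-1}(\Omega)$. By Lemma \ref{L2.1}, $\mathcal{A}_{\lambda_0}$ is continuous and satisfies
\[
\langle \mathcal{A}_{\lambda_0} z, z\rangle_{H^{-1}(\Omega),H^1_0(\Omega)} \geq \frac{\Lambda}{4}\|z\|^2_{H^1_0(\Omega)} \qquad \forall z \in H^1_0(\Omega),
\]
so the Lax--Milgram theorem yields that $\mathcal{A}_{\lambda_0}$ is an isomorphism from $H^1_0(\Omega)$ onto $H^{-1}(\Omega)$.

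\textbf{Step 2: Fredholm index zero via compact perturbation.} Writing $\mathcal{A} = \mathcal{A}_{\lambda_0} - \lambda_0 I$, I would observe that the inclusion $H^1_0(\Omega) \hookrightarrow L^2(\Omega) \hookrightarrow H^{-1}(\Omega)$ is compact by Rellich--Kondrachov, so $-\lambda_0 I$ is a compact operator from $H^1_0(\Omega)$ to $H^{-1}(\Omega)$. Hence $\mathcal{A} = \mathcal{A}_{\lambda_0}(I - \lambda_0 \mathcal{A}_{\lambda_0}^{-1})$ is a compact perturbation of an isomorphism, and therefore a Fredholm operator of index zero. Consequently, surjectivity follows from injectivity, and it suffices to show that the kernel of $\mathcal{A}$ is trivial.

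\textbf{Step 3: Injectivity via a comparison principle.} Let $y \in H^1_0(\Omega)$ satisfy $\mathcal{A} y = 0$. Since $b \in L^p(\Omega)^n$ with $p > 2$ ($n=2$) or $p \geq 3$ ($n=3$), and $a_0 \in L^q(\Omega)$ with $a_0 \geq 0$, the weak maximum principle for elliptic operators of the form $A + b\cdot\nabla + a_0$ under precisely these integrability assumptions on $b$ and $a_0$ applies (see, e.g., \cite[Theorem 8.1]{Gilbarg-Trudinger83} or \cite{Trudinger-1973}): a function in $H_0^1(\Omega)$ with $\mathcal{A} y \leq 0$ (resp.\ $\geq 0$) in the weak sense must satisfy $y \leq 0$ (resp.\ $\geq 0$) a.e.\ in $\Omega$. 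Applying this to $y$ and to $-y$ gives $y = 0$, proving injectivity.

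\textbf{Main obstacle.} Steps 1 and 2 are standard. The delicate point is Step 3: the usual maximum principle proofs (via truncation $y^+ \in H_0^1(\Omega)$ and testing) are straightforward when $\operatorname{div}\, b = 0$ or $\|b\|$ is small, but here neither is assumed. One must instead invoke the finer comparison principle of Trudinger, which uses a Stampacchia-type decomposition $b = b' + b''$ with $b' \in L^\infty$ and $\|b''\|_{L^n}$ arbitrarily small (the same device already used in the $n=3$ part of Lemma \ref{L2.1}), followed by a Moser/Stampacchia iteration on level sets. This is exactly the ingredient signalled in the introduction as the "strong use of a comparison principle," and it is the technical heart of the theorem.
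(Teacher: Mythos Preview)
Your proof is correct and takes a cleaner route than the paper's. You invoke the Fredholm alternative: G\r{a}rding plus Lax--Milgram make $\mathcal{A}+\lambda_0 I$ an isomorphism, and since the embedding $H_0^1(\Omega)\hookrightarrow H^{-1}(\Omega)$ is compact, $\mathcal{A}$ has index zero, so everything reduces to injectivity. The paper instead proves surjectivity by hand: it shows the range of $\mathcal{A}$ is dense (via injectivity of $\mathcal{A}^*$, established by a separate level-set argument) and closed (via G\r{a}rding combined with a Rellich-based contradiction on a normalized sequence). Your approach is more economical because it bypasses both the density/closedness split and the adjoint injectivity; what it gives up is self-containedness, since you cite the maximum principle rather than prove it.

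One correction to your ``main obstacle'' paragraph: the injectivity step does \emph{not} require the Stampacchia decomposition $b=b'+b''$ or any Moser/Stampacchia iteration. The paper's argument (adapting \cite[Theorem 8.1]{Gilbarg-Trudinger83}) simply tests $\mathcal{A}y=0$ against $(y-\rho)^+$, works on the set $\Omega_\rho=\{\nabla(y-\rho)^+\neq 0\}$, and obtains the lower bound $\|b\|_{L^3(\Omega_\rho)^n}\ge \Lambda/K_\Omega>0$; since $|\Omega_\rho|\to 0$ as $\rho\to\operatorname{ess\,sup}y$, this is an immediate contradiction. No smallness splitting of $b$ is needed here---that device was used only for G\r{a}rding's inequality in dimension $3$ (Lemma \ref{L2.1}), not for the comparison principle.
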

This theorem can be deduced from the results in \cite{Trudinger-1973}. We include a direct proof for the convenience of the reader.
\begin{proof}
From Lemma \ref{L2.1} we know that $\mathcal{A}$ is a well-posed linear and continuous operator. Let us divide the proof into three steps.

{\em Step 1. -- $\mathcal{A}$ is injective.} We make this proof for $n = 3$. The case $n = 2$ follows along the same lines with minor changes. To prove that the kernel of $\mathcal{A}$ is reduced to 0 we adapt the proof of Theorem 8.1 in \cite{Gilbarg-Trudinger83}. Let $y \in H_0^1(\Omega)$ satisfy that $\mathcal{A}y = 0$. We prove that $y \le 0$ in $\Omega$, the contrary inequality follows by arguing on $-y$. We argue by contradiction and we suppose that this is false. Then, we take $0 < \rho < \text{\rm ess sup}_{x \in \Omega}y(x)\leq +\infty$
and we define $y_\rho(x) = (y(x) - \rho)^+$. Obviously we have that $y_\rho \in H_0^1(\Omega)$. We denote $\Omega_\rho = \{x \in \Omega : \nabla y_\rho(x) \neq 0\}$, then
\[
\nabla y_\rho (x) = \left\{\begin{array}{cl} \nabla y(x) & \text{if } y(x) > \rho,\\0 & \text{otherwise,}\end{array}\right. \text{ and } y_\rho (x) = 0  \text{ if }  y(x) \le \rho.
\]
Using these facts and our assumptions on $b$ and $a_0$ we get
\begin{align*}
&0 = \int_\Omega\left(\sum_{i, j = 1}^n a_{ij}(x)\partial_{x_i}y\partial_{x_j}y_\rho+ [b(x)\cdot\nabla y] y_\rho + a_0(x) y y_\rho\right)\, dx\\
& \ge \int_{\Omega_\rho}\left(\sum_{i, j = 1}^n a_{ij}(x)\partial_{x_i}y_\rho\partial_{x_j}y_\rho + [b(x)\cdot\nabla y_\rho] y_\rho\right)\, dx\\
& \ge \Lambda\|\nabla y_\rho\|^2_{L^2(\Omega_\rho)^{n}} - \|b\|_{L^3(\Omega_\rho)^{n}}\|\nabla y_\rho\|_{L^2(\Omega_\rho)^{n}}\|y_\rho\|_{L^6(\Omega_\rho)}.
\end{align*}
From here and \eqref{E2.1} we infer
\begin{align*}
&\|y_\rho\|_{L^6(\Omega_\rho)} \le \|y_\rho\|_{L^6(\Omega)} \le K_\Omega\|\nabla y_\rho\|_{L^2(\Omega)^{n}}\\
& = K_\Omega\|\nabla y_\rho\|_{L^2(\Omega_\rho)^{n}} \le \frac{1}{\Lambda}K_\Omega\|b\|_{L^3(\Omega_\rho)^{n}}\|y_\rho\|_{L^6(\Omega_\rho)}.
\end{align*}
Hence, we have
\[
\|b\|_{L^3(\Omega_\rho)^{n}} \ge \frac{\Lambda}{K_\Omega} > 0,
\]
which contradicts the fact that $|\Omega_\rho| \to 0$ if $\rho \to \text{\rm ess sup}_{x \in \Omega}y(x)$. Indeed, if the set of points $E = \{x \in \Omega :  y(x) = \text{\rm ess sup}_{x \in \Omega}y(x)\}$ has zero Lebesgue measure, then it is obvious that $|\Omega_\rho| \to 0$ if $\rho \to \text{\rm ess sup}_{x \in \Omega}y(x)$. In the case that $|E| > 0$, then we have that $\nabla y(x) = 0$ a.e.~in $E$ \cite[Lemma 7.7]{Gilbarg-Trudinger83}, and consequently $\nabla y_\rho(x) = \nabla y(x) = 0$ a.e.~in $E$ as well. Hence, $|\Omega_\rho| \to 0$ holds in any case.

We notice that the same procedure can be used to prove the injectivity of $\mathcal{A}^*:H^1_0(\Omega)\to H^{-1}(\Omega)$, given by
\[\mathcal{A}^*\varphi = A^*\varphi -\div (\varphi b(x) ) + a_0\varphi.\]
% REMOVED on June 8, 2020
%To do this, we take as before $\varphi\in H^1_0(\Omega)$ such that $\mathcal{A}^*\varphi =0$, $0<\rho<\text{\rm ess sup}_{x \in \Omega}\varphi(x)$, and $\varphi_\rho(x) = (\varphi(x)-\rho)^+$.
%Using integration by parts we have
%\begin{align*}
%  0 = & \int_\Omega \left(\sum_{i, j = 1}^n a_{ij}(x)\partial_{x_i}\varphi\partial_{x_j}\varphi_\rho\, dx -\div (\varphi b(x)  ) \varphi_\rho + a_0(x) \varphi \varphi_\rho\right)\, dx  \\
%   =  & \int_\Omega \left(\sum_{i, j = 1}^n a_{ij}(x)\partial_{x_i}\varphi\partial_{x_j}\varphi_\rho\, dx +   \varphi b(x)\cdot \nabla \varphi_\rho + a_0(x) \varphi \varphi_\rho\right) dx
%\end{align*}
%and we can argue exactly as we did for $\mathcal{A}$.
% ADDED on June 8, 2020
To do this, we take $\varphi\in H^1_0(\Omega)$ such that $\mathcal{A}^*\varphi =0$, and define for all $\varepsilon \geq 0$, $\Omega^{\varepsilon}=\{x\in\Omega: |\varphi(x)|>\varepsilon\}$ and $\varphi^\varepsilon(x)=\mathrm{proj}_{[-\varepsilon,\varepsilon]}(\varphi(x))$. Using integration by parts, that $a_0(x)\varphi(x)\varphi^\varepsilon(x)\geq 0$ and the fact that $\nabla\varphi^\varepsilon=0$ in $\Omega^\varepsilon$, we have
\begin{align*}
  0 = & \int_\Omega \left(\sum_{i, j = 1}^n a_{ij}(x)\partial_{x_i}\varphi\partial_{x_j}\varphi^\varepsilon -\div (\varphi b(x)  ) \varphi^\varepsilon + a_0(x) \varphi \varphi^\varepsilon\right)\, dx  \\
   =  & \int_\Omega \left(\sum_{i, j = 1}^n a_{ij}(x)\partial_{x_i}\varphi\partial_{x_j}\varphi^\varepsilon +   \varphi b(x)\cdot \nabla \varphi^\varepsilon + a_0(x) \varphi \varphi^\varepsilon\right) dx\\
   \geq & \Lambda \|\nabla \varphi^\varepsilon\|^2_{L^2(\Omega)^n} -
   \|b\|_{L^3(\Omega^0\setminus \Omega^\varepsilon)^n}
   \|\nabla \varphi^\varepsilon\|_{L^2(\Omega)^n}
   \|\varphi^\varepsilon\|_{L^6(\Omega^0\setminus \Omega^\varepsilon)}.
\end{align*}
From here we infer that
\[\|\nabla \varphi^\varepsilon\|_{L^2(\Omega)^n}\leq \frac{1}{\Lambda}\|b\|_{L^3(\Omega^0\setminus \Omega^\varepsilon)^n}
      \|\varphi^\varepsilon\|_{L^6(\Omega^0\setminus \Omega^\varepsilon)}\leq
      \frac{1}{\Lambda}\varepsilon |\Omega^0\setminus \Omega^\varepsilon|^{\frac{1}{6}}\|b\|_{L^3(\Omega^0\setminus \Omega^\varepsilon)^n}.\]
Using this and \eqref{E2.1} we get
\begin{align*}
  |\Omega^\varepsilon| =  & \frac{1}{\varepsilon^2}\int_{\Omega^\varepsilon}\varphi^\varepsilon(x)^2 dx  \leq
  \frac{1}{\varepsilon^2}\int_{\Omega}\varphi^\varepsilon(x)^2 dx \leq
  \frac{1}{\varepsilon^2}C_\Omega^2\|\nabla\varphi^\varepsilon\|^2_{L^2(\Omega)^n}\leq
        \frac{C_\Omega^2}{\Lambda^2} |\Omega^0\setminus \Omega^\varepsilon|^{\frac{1}{3}}\|b\|^2_{L^3(\Omega^0\setminus \Omega^\varepsilon)^n},
\end{align*}
and $|\Omega^0|=\lim_{\varepsilon\to 0}|\Omega^\varepsilon| = 0$ and, hence, $\varphi=0$.
% END OF ADDED TEXT

{\em Step 2. -- The range of $\mathcal{A}$ is dense and closed.} To see that it is dense, we argue by contradiction: suppose it is not dense. Then, there exists $z\in H^{1}_0(\Omega)$ with $z \neq 0$ such that $\langle \mathcal{A} y,z\rangle_{H^{-1}(\Omega),H^1_0(\Omega)} =0$ for all $y\in H^1_0(\Omega)$. By duality, this implies  $\langle \mathcal{A}^*z,y\rangle_{H^{-1}(\Omega),H^1_0(\Omega)} =0$ for all $y\in H^1_0(\Omega)$, and hence $ \mathcal{A}^*z =0$. Since $\mathcal{A}^*$ is injective, we obtain that $z=0$, which is a contradiction.

Let us check that the range of $\mathcal{A}$ is closed. Let $\{f_k\}_{k = 1}^\infty$ be a sequence in the range of $\mathcal{A}$ such that $f_k\to f$ in $H^{-1}(\Omega)$. Let  $y_k\in H^1_0(\Omega)$ be such that $\mathcal{A} y_k=f_k$ for every $k \ge 1$. We are going to prove that $\{y_k\}_{k = 1}^\infty$ converges weakly in $H_0^1(\Omega)$ to some element $y \in H_0^1(\Omega)$ satisfying $\mathcal{A}y = f$.

First, let us prove that $y_k$ is bounded in $L^2(\Omega)$.  We argue by contradiction. Suppose it is not. Then, for a subsequence denoted in the same form, we have that $\|y_k\|_{L^2(\Omega)}\to +\infty$. Define $\hat y_k = y_k/\|y_k\|_{L^2(\Omega)}$, and $\hat f_k = f_k/\|y_k\|_{L^2(\Omega)}$. We have that $\mathcal{A}\hat y_k = \hat f_k$. Using again G\r{a}rding's inequality and the fact that $\|\hat y_k\|_{L^2(\Omega)} = 1$, we obtain
\begin{align*}
  \frac{\Lambda}{4} \|\hat y_k\|_{H^1_0(\Omega)}^2 \leq & \langle\mathcal{A}\hat y_k,\hat y_k\rangle_{H^{-1}(\Omega),H^1_0(\Omega)} + C_{\Lambda,b}\|\hat y_k\|_{L^2(\Omega)}^2 =
                                                      \langle \hat f_k,\hat y_k\rangle_{H^{-1}(\Omega),H^1_0(\Omega)} + C_{\Lambda,b}\|\hat y_k\|_{L^2(\Omega)}^2 \\
  \leq  & \|\hat f_k\|_{H^{-1}(\Omega)} \|\hat y_k\|_{H^1_0(\Omega)} + C_{\Lambda,b}\leq \frac{2}{\Lambda}\|\hat f_k\|^2_{H^{-1}(\Omega)}+\frac{\Lambda}{8} \|\hat y_k\|^2_{H^1_0(\Omega)} + C_{\Lambda,b},
\end{align*}
and therefore
\[\frac{\Lambda}{8} \|\hat y_k\|_{H^1_0(\Omega)}^2 \leq \frac{2}{\Lambda}\|\hat f_k\|^2_{H^{-1}(\Omega)}+ C_{\Lambda,b}.\]
Since $\{f_k\}_{k = 1}^\infty$ is bounded in $H^{-1}(\Omega)$, we infer from the above inequality that $\{\hat y_k\}_{k = 1}^\infty$ is bounded in $H^1_0(\Omega)$. Hence, there exists $\hat y\in H^1_0(\Omega)$ such that (for a new subsequence, again denoted in the same way) $\hat y_k\rightharpoonup \hat y$ in $H^1_0(\Omega)$, and by Rellich's theorem $\hat y_k\to \hat y$ in $L^2(\Omega)$. In particular, this implies that $\|\hat y\|_{L^2(\Omega)} =1$.
On the other hand, using that $\|\hat f_k\|_{H^{-1}(\Omega)} \to 0$ and $\mathcal{A}\hat y_k = \hat f_k$, we can pass to the limit in this equation and deduce that $\mathcal{A} \hat y = 0$. Hence $\hat y=0$ and we have a contradiction. Thus,  $\{y_k\}_{k = 1}^\infty$ is bounded in $L^2(\Omega)$.

Using again G\r{a}rding's inequality for the equation $\mathcal{A}y_k =f_k$, we obtain as above that
\[\frac{\Lambda}{8} \|y_k\|_{H^1_0(\Omega)}^2 \leq \frac{2}{\Lambda}\|f_k\|^2_{H^{-1}(\Omega)}+ C_{\Lambda,b}\|y_k\|_{L^2(\Omega)}^2,\]
and therefore, $\{y_k\}_{k=1}^\infty$ is bounded in $H^1_0(\Omega)$. So there exists $y\in H^1_0(\Omega)$ and a subsequence, denoted in the same way, such that $y_k\rightharpoonup y$ in $H^1_0(\Omega)$. Taking the limit in the equation $\mathcal{A}  y_k = f_k$, we have that $\mathcal{A}  y = f$. Hence, $f$ belongs to the range of $\mathcal{A}$ and this subspace is closed in $H_0^1(\Omega)$.

\end{proof}

The following corollary is a straightforward application of Theorem \ref{T2.2} and the H\"older regularity result \cite[Th\'eor\`eme 7.3] {Stampacchia65}; see also Theorem 14.1  in \cite{Lad-Ural-elliptic1968} and the remark after it.
\begin{crllr}
Suppose Assumption 1 holds. Then, for every $u\in L^{\bar p}(\Omega)$ there exists a unique function $y \in H_0^1(\Omega)\cap C^{0,\mu}(\bar\Omega)$, for some $\mu \in (0,1)$  independent of $u$, satisfying $\mathcal{A}y = u$. Moreover, there exists a constant $C_{\mathcal{A},\mu}$ such that
\begin{equation}
\|y\|_{C^{0,\mu}(\bar\Omega)} \le C_{\mathcal{A},\mu}\|u\|_{L^{\bar p}(\Omega)}\quad \forall u \in L^{\bar p}(\Omega).
\label{E2.4}
\end{equation}
\label{C2.3}
\end{crllr}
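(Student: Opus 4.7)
The plan is to combine Theorem \ref{T2.2} with the De Giorgi–Stampacchia Hölder regularity theory, since the assertion essentially splits into an existence/uniqueness part in $H_0^1(\Omega)$ and an a priori Hölder bound.

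First I would check that the hypothesis of Theorem \ref{T2.2} applies to any $u \in L^{\bar p}(\Omega)$. This is the embedding remarked on right after Assumption 1: since $\bar p > n/2$, we have $L^{\bar p}(\Omega) \hookrightarrow W^{-1,r}(\Omega) \hookrightarrow H^{-1}(\Omega)$ for some $r > n$, so $u$ may be interpreted as an element of $H^{-1}(\Omega)$. Theorem \ref{T2.2} then produces a unique $y \in H_0^1(\Omega)$ with $\mathcal{A} y = u$, together with an estimate $\|y\|_{H_0^1(\Omega)} \le C\|u\|_{H^{-1}(\Omega)} \le C'\|u\|_{L^{\bar p}(\Omega)}$.

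Next I would invoke the Hölder regularity theorem of Stampacchia (\cite[Théorème 7.3]{Stampacchia65}, or equivalently \cite[Theorem 14.1]{Lad-Ural-elliptic1968}). The coefficients of $\mathcal{A}$ fit its framework: the leading part is bounded and uniformly elliptic, the convection coefficient $b$ lies in $L^p(\Omega)^n$ with $p\ge 3$ for $n=3$ and $p>2$ for $n=2$, the zeroth-order coefficient $a_0$ is nonnegative and in $L^q(\Omega)$ with $q>n/2$, and the right-hand side $u$ lies in $L^{\bar p}(\Omega)$ with $\bar p > n/2$. These are exactly the Stampacchia integrability thresholds that upgrade any $H_0^1$ solution to $C^{0,\mu}(\bar\Omega)$ for some $\mu \in (0,1)$, with a bound of the form
\[
\|y\|_{C^{0,\mu}(\bar\Omega)} \le C_{\mathcal{A},\mu}\bigl(\|y\|_{H_0^1(\Omega)} + \|u\|_{L^{\bar p}(\Omega)}\bigr).
\]
Since the $H_0^1$ norm of $y$ is itself controlled by $\|u\|_{L^{\bar p}(\Omega)}$ through the isomorphism property of $\mathcal{A}$, the claimed estimate \eqref{E2.4} follows after absorbing constants, and the Hölder exponent $\mu$ depends only on the structural data $(n,\Lambda,\|a_{ij}\|_\infty,\|b\|_{L^p},\|a_0\|_{L^q},\bar p,\Omega)$ and not on $u$.

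The only mildly delicate point is to make sure that Stampacchia's result is being applied in its correct version: the bound must be linear in $\|u\|_{L^{\bar p}}$ uniformly in $u$, with $\mu$ independent of $u$. This is the standard form stated in the references, so no extra work is required beyond verifying that our assumptions on $b$, $a_0$ and $u$ meet the integrability hypotheses there; this check is the main place where the thresholds $p \ge 3$ (resp.\ $p>2$), $q \ge 3/2$ (resp.\ $q>1$), and $\bar p > n/2$ are used in an essential way.
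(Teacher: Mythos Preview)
Your proposal is correct and follows exactly the route the paper indicates: the paper states that the corollary is a straightforward application of Theorem~\ref{T2.2} together with Stampacchia's H\"older regularity result \cite[Th\'eor\`eme~7.3]{Stampacchia65} (see also \cite[Theorem~14.1]{Lad-Ural-elliptic1968}), and your argument simply spells out those two steps. No additional ideas are needed.
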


The adjoint operator also enjoys these properties.

\begin{crllr}
Under Assumption 1, the adjoint operator $\mathcal{A}^*:H_0^1(\Omega) \longrightarrow H^{-1}(\Omega)$ given by
\begin{equation}
\mathcal{A}^*\varphi = A^*\varphi - \div[b(x)\varphi] + a_0(x)\varphi
\label{E2.5}
\end{equation}
is an isomorphism. Moreover, for every $f\in  L^{\bar p}(\Omega)$, there exists a unique $\varphi \in H_0^1(\Omega)$ satisfying $\mathcal{A}^*\varphi = f$ and there exist $\mu \in (0,1)$ and $C_{\mathcal{A}^*,\mu}$  independent of $f$ such that $\varphi \in C^{0,\mu}(\bar\Omega)$ and
\begin{equation}
\|\varphi\|_{C^{0,\mu}(\bar\Omega)} \le C_{\mathcal{A}^*,\mu}\|f\|_{L^{\bar p}(\Omega)}\quad \forall f \in L^{\bar p}(\Omega).
\label{E2.6}
\end{equation}
\label{C2.4}
\end{crllr}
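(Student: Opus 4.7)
The plan is to derive this corollary from Theorem \ref{T2.2} by a duality argument, and then invoke the same H\"older regularity result used in Corollary \ref{C2.3}.

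First, I would check that $\mathcal{A}^*$ is a well-defined bounded linear map from $H_0^1(\Omega)$ to $H^{-1}(\Omega)$. The principal part $A^*\varphi$ and the zeroth-order term $a_0\varphi$ are controlled by exactly the same H\"older--Sobolev arguments as in Lemma \ref{L2.1}. For the remaining divergence term, I would observe that, since $\varphi\in H_0^1(\Omega)\hookrightarrow L^6(\Omega)$ (and $\varphi\in L^s(\Omega)$ for every finite $s$ when $n=2$) and $b\in L^p(\Omega)^n$, the product $b\varphi$ lies in $L^2(\Omega)^n$, so $-\div(b\varphi)\in H^{-1}(\Omega)$ with the natural estimate.

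Next, I would identify $\mathcal{A}^*$ with the Banach adjoint of $\mathcal{A}$ under the canonical identifications $(H^{-1}(\Omega))^*\cong H_0^1(\Omega)$ and $(H_0^1(\Omega))^*\cong H^{-1}(\Omega)$. For $y,\varphi\in H_0^1(\Omega)$, the pairings $\langle \mathcal{A}y,\varphi\rangle$ and $\langle \mathcal{A}^*\varphi,y\rangle$ agree term by term: the principal parts by the symmetry of $\int_\Omega \sum_{i,j} a_{ij}\partial_i y\,\partial_j\varphi\,dx$, the zeroth-order terms trivially, and the convection terms because, by the very definition of the distributional divergence,
\[
\langle -\div(b\varphi),y\rangle_{H^{-1}(\Omega),H_0^1(\Omega)}=\int_\Omega(b\varphi)\cdot\nabla y\,dx=\int_\Omega(b\cdot\nabla y)\varphi\,dx,
\]
which is legitimate because $b\varphi\in L^2(\Omega)^n$. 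Hence $\mathcal{A}^*$ is the Banach-space adjoint of $\mathcal{A}$.

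Combining this with the classical fact that a bounded linear operator between reflexive Banach spaces is an isomorphism if and only if its adjoint is, Theorem \ref{T2.2} immediately yields that $\mathcal{A}^*:H_0^1(\Omega)\to H^{-1}(\Omega)$ is an isomorphism; in particular, for each $f\in L^{\bar p}(\Omega)\subset H^{-1}(\Omega)$ there is a unique $\varphi\in H_0^1(\Omega)$ with $\mathcal{A}^*\varphi=f$. To obtain the H\"older regularity and the estimate \eqref{E2.6}, I would then apply \cite[Th\'eor\`eme 7.3]{Stampacchia65} (or \cite[Theorem 14.1]{Lad-Ural-elliptic1968}) to this equation: it is a linear elliptic equation in divergence form whose leading coefficients lie in $L^\infty(\Omega)$, whose lower-order divergence coefficient $b$ and zeroth-order coefficient $a_0$ have the integrability prescribed in Assumption 1, and whose right-hand side is in $L^{\bar p}(\Omega)$ with $\bar p>n/2$, matching exactly the hypotheses of that theorem. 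The only technical point deserving care is verifying that Stampacchia's structural assumptions genuinely cover the specific form of $\mathcal{A}^*$, in which the first-order coefficient enters through a divergence rather than as a drift; this is standard but warrants an exponent check against Assumption 1.
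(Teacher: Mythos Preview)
Your proposal is correct and follows essentially the same route as the paper: the isomorphism part is deduced from Theorem~\ref{T2.2} by duality (adjoint of an isomorphism between reflexive spaces is an isomorphism), and the H\"older estimate is obtained by invoking \cite[Th\'eor\`eme 7.3]{Stampacchia65} or \cite[Theorem 14.1]{Lad-Ural-elliptic1968}. Your write-up simply fills in the details the paper leaves implicit; one small wording quibble is that the matching of the principal parts is not really a ``symmetry'' of the bilinear form (the $a_{ij}$ need not be symmetric) but rather the very definition of $A^*$, which swaps the indices so that $\langle Ay,\varphi\rangle=\langle A^*\varphi,y\rangle$ holds by construction.
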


\begin{proof}
The first statement follows directly from Theorem \ref{T2.2}. The second is again a consequence of \cite[Theorem 7.3] {Stampacchia65} or \cite[Theorem 14.1]{Lad-Ural-elliptic1968}.
\end{proof}

Under additional assumptions we have the following regularity result.

\begin{thrm}
Suppose Assumption 1 holds and assume further that $a_{ij} \in C^{0,1}(\bar\Omega)$ for $1 \le i,j \le n$,  $a_0 \in L^2(\Omega)$ and $\Gamma$ is of class $C^{1,1}$ or $\Omega$ is convex. Then, $\mathcal{A}:H^2(\Omega) \cap H_0^1(\Omega) \longrightarrow L^2(\Omega)$ is an isomorphism.
\label{T2.5}
\end{thrm}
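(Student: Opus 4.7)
The plan is to check the three ingredients of an isomorphism: boundedness, injectivity, and surjectivity. Boundedness is routine Sobolev bookkeeping: after expanding $Ay$ using the Lipschitz regularity of the $a_{ij}$, one has $Ay\in L^2(\Omega)$; $a_0y\in L^2$ since $H^2(\Omega)\hookrightarrow L^\infty(\Omega)$ in dimension $n\le 3$; and $b\cdot\nabla y\in L^2$ by H\"older, using $H^2\hookrightarrow W^{1,6}$ when $n=3$ (so $b\in L^p$, $p\ge 3$, pairs with $\nabla y\in L^6$) and $H^2\hookrightarrow W^{1,q}$ for every finite $q$ when $n=2$ (so $b\in L^p$, $p>2$, pairs with $\nabla y\in L^{2p/(p-2)}$). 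Injectivity is immediate from Theorem~\ref{T2.2}: if $y\in H^2\cap H_0^1\subset H_0^1$ solves $\mathcal{A}y=0$, then $y=0$.

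For surjectivity I would run the method of continuity along the affine homotopy $\mathcal{A}_t y:=Ay + t\,b\cdot\nabla y + t\,a_0\,y$, $t\in[0,1]$. At $t=0$ the operator $A:H^2\cap H_0^1\to L^2(\Omega)$ is an isomorphism by the classical $H^2$-regularity theorem (Lipschitz $a_{ij}$, $C^{1,1}$ or convex $\Omega$), and each $\mathcal{A}_t$ is bounded between the same spaces by the previous paragraph. The task reduces to a uniform a priori bound
\[
\|y\|_{H^2(\Omega)}\le C\,\|\mathcal{A}_t y\|_{L^2(\Omega)}\qquad\forall y\in H^2\cap H_0^1,\ \forall t\in[0,1].
\]

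I would split this bound into $\|y\|_{H^2}\le C_1(\|\mathcal{A}_t y\|_{L^2}+\|y\|_{L^2})$ plus $\|y\|_{L^2}\le C_2\|\mathcal{A}_t y\|_{L^2}$. For the first, $Ay=\mathcal{A}_t y - t\,b\cdot\nabla y - t\,a_0 y$ combined with the standard $H^2$ inequality for $A$ alone reduces matters to absorbing $\|b\cdot\nabla y\|_{L^2}$ and $\|a_0 y\|_{L^2}$ into the left-hand side. The Gagliardo--Nirenberg interpolation $\|y\|_{L^\infty}\le C\|y\|_{L^2}^{1/4}\|y\|_{H^2}^{3/4}$ (and its 2D analogue) together with Young handle the $a_0 y$ term. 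The hard part is the convection term in the borderline case $n=3$, $p=3$, where the natural estimate $\|b\cdot\nabla y\|_{L^2}\le\|b\|_{L^3}\|\nabla y\|_{L^6}\le C\|b\|_{L^3}\|y\|_{H^2}$ leaves no slack; my workaround is the same Stampacchia decomposition $b=b'+b''$, $\|b''\|_{L^3}<\varepsilon$, $b'\in L^\infty$, employed in the 3D part of Lemma~\ref{L2.1}: after splitting, I choose $\varepsilon$ small and use the interpolation $\|\nabla y\|_{L^2}\le C\|y\|_{L^2}^{1/2}\|y\|_{H^2}^{1/2}$ together with Young. The residual $\|y\|_{L^2}$ in the a priori bound is then removed by contradiction: if $\|y_k\|_{L^2}=1$ and $\|\mathcal{A}_{t_k} y_k\|_{L^2}\to 0$ for some $t_k\in[0,1]$, G\r{a}rding's inequality of Lemma~\ref{L2.1} applied to $\mathcal{A}_{t_k}$ (whose constants are dominated by those of $\mathcal{A}$, since $\|t_k b\|_{L^p}\le\|b\|_{L^p}$ and $\|t_k a_0\|_{L^q}\le\|a_0\|_{L^q}$) bounds $\{y_k\}$ in $H_0^1$; Rellich extracts $y_k\to y^*$ in $L^2$ with $\|y^*\|_{L^2}=1$ and $\mathcal{A}_{t^*}y^*=0$ for some $t^*\in[0,1]$, and Theorem~\ref{T2.2} applied to $t^*b$ and $t^*a_0$ forces $y^*=0$, a contradiction.
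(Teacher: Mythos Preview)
Your argument is correct and uses the same essential tools as the paper (the Stampacchia decomposition $b=b'+b''$ in the borderline $n=3$ case, interpolation to absorb the lower-order terms, and a compactness/contradiction argument to control $\|y\|_{L^2}$), but the overall architecture is different. The paper proceeds by \emph{regularization}: it replaces $b''$ and $a_0$ by $L^\infty$ approximations $b''_k$, $a_{0,k}$, so that the approximate operator $\mathcal{A}_k$ is easily seen to map into $L^2$ and to be surjective onto $L^2$ by bootstrapping from $A^{-1}$; it then derives a uniform $H^2$ bound for $y_k=\mathcal{A}_k^{-1}u$ via Lions' lemma (in place of your Gagliardo--Nirenberg), proves $\mathcal{A}_k^{-1}\to\mathcal{A}^{-1}$ in $\mathcal{L}(H^{-1},H_0^1)$ by a Neumann-series perturbation argument, and passes to the weak $H^2$ limit. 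Your method-of-continuity route is more economical: once the uniform a~priori estimate $\|y\|_{H^2}\le C\|\mathcal{A}_t y\|_{L^2}$ is in hand, surjectivity of $\mathcal{A}$ follows immediately from that of $A$, with no need to construct approximate solutions or analyze convergence of inverses. The paper's approach, on the other hand, yields as a by-product the quantitative stability of $\mathcal{A}_k^{-1}$ under perturbation of the coefficients, which it later reuses in the proof of Theorem~\ref{T3.8}.
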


\begin{proof}
First we observe that $\mathcal{A}:H^2(\Omega) \cap H_0^1(\Omega) \longrightarrow L^2(\Omega)$ is an injective,  continuous linear operator. Indeed, taking into account that $H^2(\Omega) \subset W^{1,\frac{2p}{p-2}}(\Omega)$ and $H^2(\Omega) \subset C(\bar\Omega)$, we get the following estimates
\begin{align*}
&\|\partial_{x_j}(a_{ij}\partial_{x_i}y)\|_{L^2(\Omega)} \le \|\partial_{x_j}a_{ij}\partial_{x_i}y\|_{L^2(\Omega)} + \|a_{ij}\partial^2_{x_i,x_j}y\|_{L^2(\Omega)}\\
&\le \|\partial_{x_j}a_{ij}\|_{L^\infty(\Omega)}\|\partial_{x_i}y\|_{L^2(\Omega)} + \|a_{ij}\|_{L^\infty(\Omega)}\|\partial^2_{x_i,x_j}y\|_{L^2(\Omega)} \le \|a_{ij}\|_{C^{0,1}(\bar\Omega)}\|y\|_{H^2(\Omega)},\\
&\|b\cdot\nabla y\|_{L^2(\Omega)} \le \|b\|_{L^p(\Omega)^n}\|\nabla y\|_{L^{\frac{2p}{p-2}}(\Omega)^n} \le C\|b\|_{L^p(\Omega)^n}\|y\|_{H^2(\Omega)},\\
&\|a_0y\|_{L^2(\Omega)} \le \|a_0\|_{L^2(\Omega)}\|y\|_{L^\infty(\Omega)} \le C\|a_0\|_{L^2(\Omega)}\|y\|_{H^2(\Omega)}.
\end{align*}
The above estimates prove that $\mathcal{A}$ is well defined and continuous. The injectivity of $\mathcal{A}$ is an immediate consequence of Theorem \ref{T2.2}. Let us prove that $\mathcal{A}$ is surjective. Given $u \in L^2(\Omega)$ arbitrary, from Theorem \ref{T2.2} we deduce the existence of an element $y \in H_0^1(\Omega)$ such that $\mathcal{A}y = u$. We have to prove that $y\in H^2(\Omega)$.
To this end we divide the proof into three steps.

\textit{Step 1.-} Here we regularize the coefficients $b$ and $a_0$.
We make the proof for $n=3$ and comment later the modifications for $n=2$.

The Lipschitz regularity of the coefficients $a_{ij}$ implies that $A:H^2(\Omega) \cap H_0^1(\Omega) \longrightarrow L^2(\Omega)$ is an isomorphism; see, for instance, Theorems 2.2.2.3 and 3.2.1.2 of \cite{Grisvard85} for a $C^{1,1}$ boundary $\Gamma$ and a convex domain $\Omega$, respectively. Hence, there exists a constant $C_A$ such that
\begin{equation}\label{E2.7}\|y\|_{H^2(\Omega)} \le C_A\|Ay\|_{L^2(\Omega)}\mbox{ for all }y\in H^2(\Omega).\end{equation}
For $n=3$, we are assuming $p\geq n$, and hence $2p/(p-2)\leq 6$. Therefore, from the Sobolev imbedding theorem, we also know that there exists a constant $M_\Omega$ such that
\begin{equation}\label{E2.8}\|\nabla y\|_{L^{\frac{2p}{p-2}}(\Omega)^n} \le M_\Omega\|y\|_{H^2(\Omega)}\mbox{ for all }y\in H^2(\Omega).\end{equation}
Consider, as in the proof of Lemma \ref{L2.1}, the decomposition  $b=b'+b''$, where now
\[\|b''\|_{L^{p}(\Omega)^{n}}<\varepsilon=\frac{1}{8 C_A M_\Omega}\mbox{ and }\|b'\|_{L^\infty(\Omega)^{n}} < K_{\varepsilon,b}.\]
Consider also two sequences $\{b''_k\}_{k = 1}^\infty \subset L^\infty(\Omega)^{n}$ and $\{a_{0,k}\}_{k = 1}^\infty \subset L^\infty(\Omega)$ such that $b''_k \to b''$ strongly in $L^{p}(\Omega)^{n}$ and $0 \le a_k \to a_0$ strongly in $L^2(\Omega)$. Denote $b_k = b'+b''_k$ and  define the operator $\mathcal{A}_k:H^2(\Omega) \cap H_0^1(\Omega) \longrightarrow L^2(\Omega)$ by
\[
\mathcal{A}_kz = Az + b_k(x) \cdot \nabla z + a_{0,k}(x)z.
\]
Let us prove that $\mathcal A_k \colon H^2(\Omega) \cap H^1_0(\Omega) \to L^2(\Omega)$ is an isomorphism. The proof of the continuity and injectivity follows as we did for $\mathcal{A}$. Now,  from Theorem \ref{T2.2} we deduce the existence of an element $y_k \in H_0^1(\Omega)$ such that $\mathcal{A}_ky_k = u$. This equation can be written as follows
\[
A y_k = u - b_k(x)\cdot\nabla y_k - a_{0,k}(x)y_k.
\]
We have the estimates
\begin{align*}
&\|b_k\cdot\nabla y_k\|_{L^2(\Omega)} \le \|b_k\|_{L^\infty(\Omega)^{n}}\|\nabla y_k\|_{L^2(\Omega)^{n}},\\
&\|a_{0,k}y_k\|_{L^2(\Omega)} \le \|a_{0,k}\|_{L^\infty(\Omega)}\|y_k\|_{L^2(\Omega)}.
\end{align*}
Thus, we have that $Ay_k \in L^2(\Omega)$.  From here and \eqref{E2.7} we infer
\begin{align*}
&\|y_k\|_{H^2(\Omega)} \le C_A\|u - b_k\cdot\nabla y_k - a_{0,k}y_k\|_{L^2(\Omega)}\\
&\le C_A\Big(\|u\|_{L^2(\Omega)} + \|b' \cdot\nabla y_k\|_{L^2(\Omega)} + \|b''_k\cdot\nabla y_k\|_{L^2(\Omega)}+ \|a_{0,k}y_k\|_{L^2(\Omega)}\Big)\\
&\le C_A\Big(\|u\|_{L^2(\Omega)} + \|b'\|_{L^\infty(\Omega)^{n}} \|\nabla y_k\|_{L^2(\Omega)^n}  + \|b''_k\|_{L^{p}(\Omega)^{n}}\|\nabla y_k\|_{L^{\frac{2p}{p-2}}(\Omega)^{n}} + \|a_{0,k}\|_{L^2(\Omega)}\|y_k\|_{L^\infty(\Omega)}\Big).
\end{align*}
From the strong convergences of the sequences $\{b''_k\}_{k = 1}^\infty$ and $\{a_{0,k}\}_{k = 1}^\infty$, we deduce the existence of some integer $k_0$ such that
\[
\|b''_k\|_{L^{p}(\Omega)^{n}} \le 2\|b''\|_{L^{p}(\Omega)^{n}} \text{ and } \|a_{0,k}\|_{L^2(\Omega)} \le 2\|a_0\|_{L^2(\Omega)} \ \ \forall k \ge k_0.
\]
Inserting these inequalities in the above expression and taking into account the relations $\|b'\|_{L^\infty(\Omega)^{n}}\leq K_{\varepsilon,b}$ and $\|b''\|_{L^{p}(\Omega)^{n}}<\varepsilon$, we infer $\forall k \ge k_0$
\begin{equation}\label{E2.9}
\|y_k\|_{H^2(\Omega)} \le C_A\Big(\|u\|_{L^2(\Omega)} + K_{\varepsilon,b} \|\nabla y_k\|_{L^2(\Omega)^{n}} +
2\varepsilon\|\nabla y_k\|_{L^{\frac{2p}{p-2}}(\Omega)^n} + 2\|a_0\|_{L^2(\Omega)}\|y_k\|_{L^\infty(\Omega)}\Big).
\end{equation}
Now, we apply Lions' Lemma %, \cite[Chapter 2, Lemma 6.1]{Necas67},
to the embeddings $H^2(\Omega) \subset H^1(\Omega) \subset L^2(\Omega)$ and $H^2(\Omega) \subset C(\bar\Omega) \subset L^2(\Omega)$, where the first embedding in each of the chains is compact.
Selecting
\[
\lambda = \frac{1}{4C_A\max\{K_{\varepsilon,b},2\|a_0\|_{L^2(\Omega)}\}},
\]
we deduce the existence of a constant $C_\lambda$ such that for all $y \in H^2(\Omega) \cap H_0^1(\Omega)$
\[
\|\nabla y\|_{L^{2}(\Omega)^n} \le \lambda\|y\|_{H^2(\Omega)} + C_\lambda\|y\|_{L^2(\Omega)}
\]
 and
 \[
  \|y\|_{L^\infty(\Omega)}\le \lambda\|y\|_{H^2(\Omega)} + C_\lambda\|y\|_{L^2(\Omega)}.
\]
Inserting these two inequalities in the estimate \eqref{E2.9}, using \eqref{E2.8}, and taking into account the definition of $\varepsilon$, we get
\[
\|y_k\|_{H^2(\Omega)} \le C_A\Big(\|u\|_{L^2(\Omega)} + C_\lambda\left[K_{\varepsilon,b} + 2\|a_0\|_{L^2(\Omega)}\right]\|y_k\|_{L^2(\Omega)}\Big) + \frac{3}{4}\|y_k\|_{H^2(\Omega)} ,
\]
which,  leads to the desired estimate:
\begin{equation}
\|y_k\|_{H^2(\Omega)} \le 4C_A\Big(\|u\|_{L^2(\Omega)} + C_\lambda[K_{\varepsilon,b} + \|a_0\|_{L^2(\Omega)}]\|y_k\|_{L^2(\Omega)}\Big).
\label{E2.10}
\end{equation}

For $n=2$ the proof is slightly different. We take $\{b_k\}_{k = 1}^\infty \subset L^\infty(\Omega)^{n}$ such that $b_k \to b$ strongly in $L^p(\Omega)^{n}$. In the same way as before we obtain that $y_k\in H^2(\Omega)$, but we directly write the estimate
\begin{align*}
&\|y_k\|_{H^2(\Omega)} \le C_A\|u - b_k\cdot\nabla y_k - a_{0,k}y_k\|_{L^2(\Omega)}\\
&\le C_A\Big(\|u\|_{L^2(\Omega)} +  \|b_k\|_{L^p(\Omega)^n}\|\nabla y_k\|_{L^{\frac{2p}{p-2}}(\Omega)^n} + \|a_{0,k}\|_{L^2(\Omega)}\|y_k\|_{L^\infty(\Omega)}\Big).
\end{align*}
Now, we do not have that $\|b_k\|_{L^p(\Omega)^{n}}$ is small, but since for $n=2$ we are assuming $p>n$ we have that $H^2(\Omega)\subset W^{1,\frac{2p}{p-2}}(\Omega)\subset L^2(\Omega)$, the first embedding being compact, and we can argue using Lions' Lemma for the term $\|\nabla y_k\|_{L^{\frac{2p}{p-2}}(\Omega)^{n}}$.

\textit{Step 2.-} Let us prove that $y_k \to y = \mathcal{A}^{-1}u$ strongly  in $H_0^1(\Omega)$. We have
\[
\|y_k -y\|_{H_0^1(\Omega)} \le C_\Omega\|\mathcal{A}_k^{-1} - \mathcal{A}^{-1}\|_{\mathcal{L}(H^{-1}(\Omega),H_0^1(\Omega))}\|u\|_{L^2(\Omega)},
\]
it is enough to show that $\|\mathcal{A}_k^{-1} - \mathcal{A}^{-1}\|_{\mathcal{L}(H^{-1}(\Omega),H_0^1(\Omega))} \to 0$ when $k \to \infty$. First we prove that $\{\mathcal{A}_k^{-1}\}_{k = 1}^\infty$ is bounded in $\mathcal{L}(H^{-1}(\Omega),H_0^1(\Omega))$.  It is immediate that
\begin{equation}
\|\mathcal{A}_k - \mathcal{A}\|_{\mathcal{L}(H_0^1(\Omega),H^{-1}(\Omega))} \le C_{\Omega,p}\|b_k - b\|_{L^{\frac{2p}{p-2}}(\Omega)^n} + C_\Omega\|a_{0.k} - a_0\|_{L^2(\Omega)} \stackrel{k \to \infty}{\longrightarrow} 0.
\label{E2.11}
\end{equation}
Hence, there exists an integer $k_1$ such that
\[
\|\mathcal{A}_k - \mathcal{A}\|_{\mathcal{L}(H_0^1(\Omega),H^{-1}(\Omega))} \le \frac{1}{2\|\mathcal{A}^{-1}\|_{\mathcal{L}(H^{-1}(\Omega),H_0^1(\Omega))}}\ \ \forall k \ge k_1.
\]
This implies that
\[
\|\mathcal{A}^{-1}(\mathcal{A} - \mathcal{A}_k)\|_{\mathcal{L}(H_0^1(\Omega),H_0^1(\Omega))} \le \frac{1}{2}\ \ \forall k\ge k_1.
\]
Hence, the operator $I - \mathcal{A}^{-1}(\mathcal{A} - \mathcal{A}_k)$ is invertible in $H_0^1(\Omega)$ and its inverse is given by
\[
\Big(I - \mathcal{A}^{-1}(\mathcal{A} - \mathcal{A}_k)\Big)^{-1} = \sum_{j = 0}^\infty \Big[\mathcal{A}^{-1}(\mathcal{A} - \mathcal{A}_k)\Big]^j.
\]
From here we obtain for every $k \ge k_1$
\[
\|\Big(I - \mathcal{A}^{-1}(\mathcal{A} - \mathcal{A}_k)\Big)^{-1}\|_{\mathcal{L}(H_0^1(\Omega),H_0^1(\Omega))} \le  \sum_{j = 0}^\infty \|\mathcal{A}^{-1}(\mathcal{A} - \mathcal{A}_k)\|_{\mathcal{L}(H_0^1(\Omega),H_0^1(\Omega))}^j \le 2.
\]
Now, we observe that $\mathcal{A}_k^{-1} = \Big(I - \mathcal{A}^{-1}(\mathcal{A} - \mathcal{A}_k)\Big)^{-1}\mathcal{A}^{-1}$. Thus, we obtain that
\begin{equation}
\|\mathcal{A}_k^{-1}\|_{\mathcal{L}(H^{-1}(\Omega),H_0^1(\Omega))} \le 2\|\mathcal{A}^{-1}\|_{\mathcal{L}(H^{-1}(\Omega),H_0^1(\Omega))}\ \ \forall k \ge k_1.
\label{E2.12}
\end{equation}
Finally, we find with \eqref{E2.11} and \eqref{E2.12} that
\begin{align*}
&\|\mathcal{A}_k^{-1} - \mathcal{A}^{-1}\|_{\mathcal{L}(H^{-1}(\Omega),H_0^1(\Omega))} = \|\mathcal{A}^{-1}[\mathcal{A} - \mathcal{A}_k]\mathcal{A}_k^{-1}\|_{\mathcal{L}(H^{-1}(\Omega),H_0^1(\Omega))}\\
& \le \|\mathcal{A}^{-1}\|_{\mathcal{L}(H^{-1}(\Omega),H_0^1(\Omega))}\|\mathcal{A} - \mathcal{A}_k\|_{\mathcal{L}(H_0^1(\Omega),H^{-1}(\Omega))}\|\mathcal{A}_k^{-1}\|_{\mathcal{L}(H^{-1}(\Omega),H_0^1(\Omega))}\\
&\leq 2\|\mathcal{A}^{-1}\|_{\mathcal{L}(H^{-1}(\Omega),H_0^1(\Omega))}^2\|\mathcal{A} - \mathcal{A}_k\|_{\mathcal{L}(H_0^1(\Omega),H^{-1}(\Omega))} \stackrel{k \to \infty}{\longrightarrow} 0.
\end{align*}

\textit{Step 3.-} Finally, the estimate \eqref{E2.10} and the convergence $y_k \to y$ in $H_0^1(\Omega)$ yields $y_k \rightharpoonup y$ weakly in $H^2(\Omega)$. Since $u \in L^2(\Omega)$ was arbitrary, this implies the surjectivity of $\mathcal{A}$.
\end{proof}

\begin{crllr}
Under the assumptions of Theorem \ref{T2.5} and in addition $\div{b} \in L^2(\Omega)$, the operator $\mathcal{A}^*:H^2(\Omega) \cap H_0^1(\Omega) \longrightarrow L^2(\Omega)$ is an isomorphism.
\label{C2.6}
\end{crllr}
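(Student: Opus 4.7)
The plan is to adapt the proof of Theorem \ref{T2.5} to $\mathcal{A}^*$. Using the identity $-\operatorname{div}(b\varphi) = -b\cdot\nabla\varphi - (\operatorname{div} b)\varphi$, I rewrite
\[
\mathcal{A}^*\varphi = A^*\varphi + (-b)\cdot\nabla\varphi + \bigl(a_0 - \operatorname{div} b\bigr)\varphi.
\]
This has exactly the structural form of $\mathcal{A}$: the principal part $A^*$ is elliptic with the same constant $\Lambda$ and has the Lipschitz coefficients $a_{ji}$ (so the Grisvard isomorphism $A^*:H^2(\Omega)\cap H_0^1(\Omega)\to L^2(\Omega)$ that drives Theorem \ref{T2.5} is available), $-b$ satisfies the same $L^p$ bound as $b$, and the new zero-order coefficient $a_0-\operatorname{div} b$ lies in $L^2(\Omega)$ thanks to the additional hypothesis. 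The only obstruction to invoking Theorem \ref{T2.5} verbatim is that this coefficient need not be nonnegative, a property of Assumption 1 that Theorem \ref{T2.5}'s argument uses through its appeal to Theorem \ref{T2.2}. I bypass this by using Corollary \ref{C2.4} in place of Theorem \ref{T2.2} as the underlying existence statement.

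Continuity of $\mathcal{A}^*:H^2(\Omega)\cap H_0^1(\Omega)\to L^2(\Omega)$ reduces to the estimates in Theorem \ref{T2.5} together with the new bound
\[
\|(\operatorname{div} b)\varphi\|_{L^2(\Omega)}\le\|\operatorname{div} b\|_{L^2(\Omega)}\|\varphi\|_{L^\infty(\Omega)}\le C\|\operatorname{div} b\|_{L^2(\Omega)}\|\varphi\|_{H^2(\Omega)},
\]
via $H^2(\Omega)\hookrightarrow L^\infty(\Omega)$ for $n\le 3$. Injectivity is immediate from Corollary \ref{C2.4}. For surjectivity, given $f\in L^2(\Omega)\subset L^{\bar p}(\Omega)$, Corollary \ref{C2.4} already provides a unique $\varphi\in H_0^1(\Omega)\cap C^{0,\mu}(\bar\Omega)$ with $\mathcal{A}^*\varphi=f$, and it remains to show $\varphi\in H^2(\Omega)$.

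To this end I mimic the three-step regularization of Theorem \ref{T2.5}. I pick $b_k\in C^\infty(\bar\Omega)^n$ with $b_k\to b$ in $L^p(\Omega)^n$ and, crucially, $\operatorname{div} b_k\to\operatorname{div} b$ in $L^2(\Omega)$ (extension of $b$ followed by mollification, which commutes with differentiation), and $a_{0,k}:=\min(a_0,k)\in L^\infty(\Omega)$, so $0\le a_{0,k}\to a_0$ in $L^2(\Omega)$. The forward operator $\mathcal{A}_k$ then falls under Theorem \ref{T2.5}, hence is an $H^2\cap H_0^1\to L^2$ isomorphism. Its adjoint $\mathcal{A}^*_k$, whose lower-order coefficients are all in $L^\infty(\Omega)$, is a compact perturbation of the Grisvard isomorphism $A^*:H^2\cap H_0^1\to L^2$ and is injective by Corollary \ref{C2.4}, so the Fredholm alternative yields that $\mathcal{A}^*_k:H^2\cap H_0^1\to L^2$ is also an isomorphism. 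Set $\varphi_k:=(\mathcal{A}^*_k)^{-1}f$. Applying the Grisvard estimate to $A^*\varphi_k = f + b_k\cdot\nabla\varphi_k - (a_{0,k}-\operatorname{div} b_k)\varphi_k$ and absorbing terms via Lions' Lemma on the compact embeddings $H^2\subset H^1$ and $H^2\subset L^\infty$, with the new contribution controlled by $\|(\operatorname{div} b_k)\varphi_k\|_{L^2}\le\|\operatorname{div} b_k\|_{L^2}\|\varphi_k\|_{L^\infty}$, I obtain a uniform a priori bound $\|\varphi_k\|_{H^2}\le C(\|f\|_{L^2}+\|\varphi_k\|_{L^2})$.

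The Neumann-series perturbation argument of Step~2 of Theorem \ref{T2.5} then delivers $\varphi_k\to\varphi$ in $H_0^1(\Omega)$, provided I verify $\|\mathcal{A}^*_k-\mathcal{A}^*\|_{\mathcal{L}(H_0^1(\Omega),H^{-1}(\Omega))}\to 0$: the convection and $a_0$ pieces are handled exactly as in \eqref{E2.11}, while the new piece involving $\operatorname{div} b_k-\operatorname{div} b$ uses H\"older together with $H_0^1(\Omega)\hookrightarrow L^6(\Omega)$ (for $n=3$) to give $\|(\operatorname{div} b_k-\operatorname{div} b)\psi\|_{L^{3/2}(\Omega)}\le\|\operatorname{div} b_k-\operatorname{div} b\|_{L^2(\Omega)}\|\psi\|_{L^6(\Omega)}$, combined with $L^{3/2}(\Omega)\hookrightarrow H^{-1}(\Omega)$. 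Extracting a weak $H^2$-limit from the bounded sequence $\{\varphi_k\}$ identifies it with $\varphi$, hence $\varphi\in H^2(\Omega)$. The main technical point is selecting an approximation of $b$ compatible with both the $L^p$ norm of $b$ and the $L^2$ norm of $\operatorname{div} b$, which is why mollification (rather than, say, truncation) is the right regularization.
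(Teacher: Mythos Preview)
Your approach is correct in outline but far more laborious than necessary, and it carries one technical oversight. The paper dispatches the corollary in three lines once the structural identity you wrote down is in hand. Since Corollary~\ref{C2.4} already gives $\varphi\in L^\infty(\Omega)$, the term $(\div b)\varphi$ belongs to $L^2(\Omega)$; moving it to the right-hand side leaves
\[
A^*\varphi-b\cdot\nabla\varphi+a_0\varphi=f+(\div b)\,\varphi\in L^2(\Omega),
\]
and the operator on the left now satisfies all the hypotheses of Theorem~\ref{T2.5} (same ellipticity for $A^*$, drift $-b\in L^p$, and \emph{nonnegative} zero-order coefficient $a_0$). One invocation of Theorem~\ref{T2.5} then gives $\varphi\in H^2(\Omega)$ directly. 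You correctly identified that the sign of $a_0-\div b$ is the sole obstruction, but you work around it by replaying the entire regularization-and-limit machinery instead of simply isolating the offending term as a known $L^2$ source.

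Your longer route would also succeed, but as written it has a gap in the borderline case $n=3$, $p=3$: to obtain a \emph{uniform-in-$k$} bound $\|\varphi_k\|_{H^2}\le C(\|f\|_{L^2}+\|\varphi_k\|_{L^2})$ you must absorb $\|b_k\cdot\nabla\varphi_k\|_{L^2}$, and the compact embeddings you invoke, $H^2\subset H^1$ and $H^2\subset L^\infty$, do not do the job. Estimating by $\|b_k\|_{L^\infty}\|\nabla\varphi_k\|_{L^2}$ fails because $\|b_k\|_{L^\infty}$ blows up under mollification; estimating by $\|b_k\|_{L^3}\|\nabla\varphi_k\|_{L^6}$ fails because $H^2\hookrightarrow W^{1,6}$ is the endpoint Sobolev embedding in dimension three and is not compact, so Lions' Lemma is unavailable. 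This is precisely why the proof of Theorem~\ref{T2.5} for $n=3$ resorts to the Stampacchia splitting $b=b'+b''$ with $\|b''\|_{L^3}$ small---a device your mollification does not by itself reproduce. The fix is easy (superimpose the splitting on your approximation), but the paper's direct reduction to Theorem~\ref{T2.5} sidesteps the whole issue.
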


\begin{proof}
Injectivity follows from Corollary \ref{C2.4}. Let us prove that it is surjective. Take $f\in L^2(\Omega)$ and let $\varphi\in H^1_0(\Omega)$ be the unique solution of $\mathcal{A}^*\varphi = f$. Notice that, from the second part of Corollary \ref{C2.4} we also know that $\varphi\in L^\infty(\Omega)$. Taking into account that we can write
\[
\mathcal{A}^*\varphi = A^*\varphi - b(x)\cdot \nabla\varphi + (a_0(x) - \div{b}(x))\varphi,
\]
we have that
\[
A^*\varphi - b(x)\cdot \nabla\varphi + a_0(x)\varphi = f + \div{b}(x)\varphi\mbox{ in }\Omega,\ \varphi = 0\mbox{ on }\Gamma
\]
and the result follows from Theorem \ref{T2.5} because $ f + \div b (x)\varphi\in L^2(\Omega)$.
\end{proof}

\subsection{Analysis of the semilinear equation}
\label{S2.2}
Here, we analyze the equation \eqref{E1.1}. To deal with this equation we make the following hypotheses on the nonlinear term $f$.

\textit{Assumption 2.} We assume that $f:\Omega \times \mathbb{R} \longrightarrow \mathbb{R}$ is a Carath\'eodory function monotone nondecreasing with respect to the second variable satisfying:
\begin{equation}
\forall M > 0 \ \exists \phi_M \in L^{\bar p}(\Omega): |f(x,y)| \le \phi_M(x) \  \mbox{ for a.a. } x \in \Omega \text{ and } \forall |y| \le M.
\label{E2.13}
\end{equation}
Let us recall that $\bar p$ stands for a real number bigger than $\frac{n}{2}$.

Now, we prove the existence and uniqueness of a solution for problem \eqref{E1.1}.

\begin{thrm}
Under the Assumptions 1 and 2, for every $u \in L^{\bar p}(\Omega)$ the equation \eqref{E1.1} has a unique solution $y_u$ in $H_0^1(\Omega) \cap C(\bar\Omega)$. Moreover, there exists a constant $K_f$ independent of $u$ such that
\begin{equation}
\|y_u\|_{H_0^1(\Omega)} + \|y_u\|_{C(\bar\Omega)} \le K_f\Big(\|u\|_{L^{\bar p}(\Omega)} + \|f(\cdot,0)\|_{L^{\bar p}(\Omega)} + 1\Big).
\label{E2.14}
\end{equation}
\label{T2.7}
\end{thrm}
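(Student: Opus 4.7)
The plan is to reduce the semilinear problem to the linear theory of Section~\ref{S2.1} by truncating $f$ at a level dictated by the data, solving the truncated problem via Schauder's theorem, and then using a comparison argument to show that the fixed point already satisfies the original equation. The tool used repeatedly in this scheme---for comparison, for the $L^\infty$-estimate and for uniqueness---is the following \emph{weak comparison principle} for $\mathcal{A}_0:=A+b\cdot\nabla$, which I would extract verbatim from Step~1 of the proof of Theorem~\ref{T2.2}: if $w\in H_0^1(\Omega)$ satisfies $\langle\mathcal{A}_0 w,(w-\rho)^+\rangle_{H^{-1},H_0^1}\le 0$ for every $\rho\in(0,\operatorname{ess\,sup}w)$, then $w\le 0$ a.e. In particular, Corollary~\ref{C2.3} produces a unique $\bar y\in H_0^1(\Omega)\cap C(\bar\Omega)$ with $\mathcal{A}_0\bar y=|u|+|f(\cdot,0)|$, and the comparison principle (applied to $-\bar y$) gives $\bar y\ge 0$ and $\|\bar y\|_{C(\bar\Omega)}\le M_0:=C_{\mathcal{A},\mu}\bigl(\|u\|_{L^{\bar p}(\Omega)}+\|f(\cdot,0)\|_{L^{\bar p}(\Omega)}\bigr)$.

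Next I truncate by $\tilde f(x,s):=f(x,\mathrm{proj}_{[-M_0,M_0]}s)$, which is Carath\'eodory, monotone in $s$, and dominated by $\phi_{M_0}\in L^{\bar p}(\Omega)$ uniformly in $s$. The map $T\colon C(\bar\Omega)\to C(\bar\Omega)$ sending $v$ to the unique solution of $\mathcal{A}_0 y=u-\tilde f(\cdot,v)$ is continuous (by dominated convergence together with the Carath\'eodory property of $f$), and by Corollary~\ref{C2.3} its range sits in a bounded set of $C^{0,\mu}(\bar\Omega)$, so $T$ is compact. Schauder's fixed-point theorem yields $y\in C(\bar\Omega)\cap H_0^1(\Omega)$ with $\mathcal{A}_0 y+\tilde f(\cdot,y)=u$. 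To remove the truncation, I set $w=y-\bar y$ and observe that on $\{w>0\}\subset\{y>\bar y\ge 0\}$ one has $\mathrm{proj}_{[-M_0,M_0]}y\ge 0$, so monotonicity of $f(x,\cdot)$ gives $\tilde f(x,y)\ge f(x,0)\ge-|f(x,0)|$ and consequently $\mathcal{A}_0 w=(u-|u|)-\tilde f(\cdot,y)-|f(\cdot,0)|\le 0$ on $\{w>0\}$; the comparison principle yields $w\le 0$, i.e.\ $y\le\bar y$, and symmetrically $y\ge-\bar y$. Hence $\|y\|_{C(\bar\Omega)}\le M_0$, the projection is inactive, $\tilde f(\cdot,y)=f(\cdot,y)$, and $y$ solves \eqref{E1.1}.

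Uniqueness runs on the same template: for two solutions $y_1,y_2$, the identity $\mathcal{A}_0(y_1-y_2)=f(\cdot,y_2)-f(\cdot,y_1)$ has right-hand side $\le 0$ on $\{y_1>y_2\}$ by monotonicity, and the comparison principle applied to $w=y_1-y_2$ forces $y_1\le y_2$, with equality by symmetry. Finally, the a priori estimate \eqref{E2.14} follows by combining $\|y_u\|_{C(\bar\Omega)}\le M_0$ with the $H_0^1$-bound obtained from Theorem~\ref{T2.2} applied to $\mathcal{A}_0 y_u=u-f(\cdot,y_u)$ via the embedding $L^{\bar p}(\Omega)\hookrightarrow H^{-1}(\Omega)$, using $\|f(\cdot,y_u)\|_{L^{\bar p}(\Omega)}\le\|\phi_{M_0}\|_{L^{\bar p}(\Omega)}$ and absorbing the latter into $K_f$. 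The main obstacle is the noncoercivity of $\mathcal{A}_0$: since a standard energy test with $y$ gives no $L^\infty$- or $H_0^1$-bound, coercivity has to be replaced by the Stampacchia-type argument of Theorem~\ref{T2.2}, which together with the monotonicity of $f$ delivers the $L^\infty$-bound via the linear supersolution $\bar y$. The price to pay is the slightly delicate, \emph{a priori} choice of the truncation level $M_0$, whose legitimacy is vindicated only a posteriori by the comparison $|y|\le M_0$.
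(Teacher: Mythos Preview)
Your existence and uniqueness arguments are correct, and they streamline the paper's proof. The paper proceeds through a three-stage bootstrap---first a globally bounded $f$, then $f$ bounded below, then the general case---removing successive truncations $f_k(x,y)=f\bigl(x,\mathrm{proj}_{[-k,k]}y\bigr)$ only in the limit $k\to\infty$ by sandwiching $y_k$ between auxiliary sub/super\-solutions $z_1,z_2$. You instead fix the truncation level $M_0=\|\bar y\|_{C(\bar\Omega)}$ \emph{a priori} from the linear barrier $\bar y$ and remove the truncation in a single comparison step, obtaining directly $|y_u|\le\bar y$ and hence the sharp bound $\|y_u\|_{C(\bar\Omega)}\le C_{\mathcal{A},\mu}\bigl(\|u\|_{L^{\bar p}}+\|f(\cdot,0)\|_{L^{\bar p}}\bigr)$. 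Both routes rest on the same Stampacchia-type comparison principle (packaged in the paper as Lemma~\ref{L2.8}); yours uses it more efficiently and avoids the intermediate Step~2 altogether.

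There is, however, a slip in your derivation of the $H_0^1$-part of \eqref{E2.14}. You bound $\|f(\cdot,y_u)\|_{L^{\bar p}}\le\|\phi_{M_0}\|_{L^{\bar p}}$ and propose to ``absorb the latter into $K_f$'', but $M_0$ depends (linearly) on $\|u\|_{L^{\bar p}}$, while Assumption~2 imposes no control on the growth of $M\mapsto\|\phi_M\|_{L^{\bar p}}$; for $f(x,y)=e^y-1$, say, this norm is exponential in $M$, and your bound on $\|y_u\|_{H_0^1}$ becomes exponential in $\|u\|_{L^{\bar p}}$ rather than affine as \eqref{E2.14} asserts. The remedy is to exploit the monotonicity of $f$ once more: test the equation with $y_u$, invoke G\r{a}rding's inequality \eqref{E2.3}, and discard the sign-definite term $\int_\Omega\bigl[f(x,y_u)-f(x,0)\bigr]\,y_u\,dx\ge 0$, obtaining
\[
\tfrac{\Lambda}{4}\|y_u\|_{H_0^1(\Omega)}^2\le\|u-f(\cdot,0)\|_{H^{-1}(\Omega)}\|y_u\|_{H_0^1(\Omega)}+C_{\Lambda,b}\|y_u\|_{L^2(\Omega)}^2.
\]
The already-established bound $\|y_u\|_{L^2(\Omega)}\le|\Omega|^{1/2}M_0$ then closes the estimate in the required affine form. (The paper is equally terse on this point.)
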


Before proving this theorem we establish the following lemma.

\begin{lmm}
Let $g:\Omega \times \mathbb{R} \longrightarrow \mathbb{R}$ be a function satisfying Assumption 2. We also suppose that Assumption 1 holds. Then, if  $y_1, y_2 \in H_0^1(\Omega) \cap L^\infty(\Omega)$ are solutions of the equations
\begin{equation}
Ay_i + b(x)\cdot\nabla y_i + g(x,y_i) = u_i, \ \ i= 1, 2,
\label{E2.15}
\end{equation}
with $u_1, u_2 \in L^{\bar p}(\Omega)$ and $u_1 \le u_2$ in $\Omega$, then $y_1 \le y_2$ in $\Omega$ as well.
\label{L2.8}
\end{lmm}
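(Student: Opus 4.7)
The strategy is to adapt Step~1 of the proof of Theorem~\ref{T2.2}: assume the conclusion fails, introduce a super-level truncation of the difference $w := y_1 - y_2$, test the subtracted equation against it, and exploit the monotonicity of $g$ together with the $L^p$-integrability of $b$ over shrinking sets. Since $y_1,y_2 \in H_0^1(\Omega)\cap L^\infty(\Omega)$, Assumption~2 gives $g(\cdot,y_i)\in L^{\bar p}(\Omega)\subset H^{-1}(\Omega)$, and subtracting the two equations in \eqref{E2.15} yields
\[
Aw + b\cdot\nabla w + g(\cdot,y_1)-g(\cdot,y_2) = u_1 - u_2 \le 0 \quad\text{in } H^{-1}(\Omega).
\]
Arguing by contradiction, I would suppose $M := \operatorname{ess\,sup}_\Omega w > 0$, fix $0 < \rho < M$, and set $w_\rho := (w-\rho)^+$. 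Because $w$ has zero trace, $w_\rho \in H_0^1(\Omega)$ and is admissible as a test function.

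\textbf{Testing and key estimate.} On $\Omega_\rho := \{w > \rho\}$ one has $y_1 > y_2$, so monotonicity of $g$ gives $(g(\cdot,y_1)-g(\cdot,y_2))\,w_\rho \ge 0$, while $(u_1-u_2)\,w_\rho \le 0$. Ellipticity of $A$ together with $\nabla w_\rho = \chi_{\Omega_\rho}\nabla w$ yields $\langle Aw,w_\rho\rangle \ge \Lambda\|\nabla w_\rho\|^2_{L^2(\Omega)^n}$. Inserting these three facts into the duality pairing of the equation with $w_\rho$ reduces it to
\[
\Lambda\|\nabla w_\rho\|^2_{L^2(\Omega)^n} \le -\int_{\Omega_\rho}(b\cdot\nabla w_\rho)\,w_\rho\,dx.
\]
Setting $E := \{w = M\}$ and $\tilde\Omega_\rho := \Omega_\rho \setminus E$, I invoke \cite[Lemma~7.7]{Gilbarg-Trudinger83} to get $\nabla w = 0$ a.e.\ on $E$, so the right-hand side reduces to an integral over $\tilde\Omega_\rho$. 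H\"older's inequality (with exponents $3,2,6$ when $n=3$, respectively $p,2,\tfrac{2p}{p-2}$ when $n=2$) combined with the Sobolev embedding \eqref{E2.1} then gives, in dimension three,
\[
\Lambda\|\nabla w_\rho\|^2_{L^2(\Omega)^n} \le K_\Omega\|b\|_{L^3(\tilde\Omega_\rho)^n}\,\|\nabla w_\rho\|^2_{L^2(\Omega)^n},
\]
with the obvious analogue in dimension two.

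\textbf{Contradiction.} As $\rho\uparrow M$, the sets $\Omega_\rho$ decrease monotonically to $\{w\ge M\}=E$, so $|\tilde\Omega_\rho|\to 0$; absolute continuity of the Lebesgue integral then forces $\|b\|_{L^3(\tilde\Omega_\rho)^n}\to 0$. Hence for $\rho$ sufficiently close to $M$ the previous inequality forces $\nabla w_\rho\equiv 0$, and since $w_\rho\in H_0^1(\Omega)$ this yields $w_\rho\equiv 0$, i.e.\ $w\le\rho<M$ a.e.\ in $\Omega$, contradicting the definition of $M$. Therefore $M\le 0$ and $y_1\le y_2$. The main obstacle is the convection integral: because $\operatorname{div} b$ need not vanish, integration by parts is unavailable, and the only route is the H\"older--Sobolev bound above, which succeeds only because the relevant $L^3$ (respectively $L^p$) norm of $b$ is taken over the shrinking set $\tilde\Omega_\rho$. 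Monotonicity of $g$ is indispensable here, since it is what allows the zeroth-order term to be discarded with the correct sign.
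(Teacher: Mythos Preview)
Your proof is correct and follows essentially the same route as the paper's: subtract the equations, test with the truncation $(y_1-y_2-\rho)^+$, use the monotonicity of $g$ and the sign of $u_1-u_2$ to discard those terms, and derive a contradiction from the fact that $\|b\|_{L^3}$ (resp.\ $\|b\|_{L^p}$) over the shrinking set tends to zero. The only cosmetic differences are that the paper takes $\Omega_\rho=\{\nabla z\neq 0\}$ (which already excludes the level set $E$) and phrases the contradiction as a uniform positive lower bound on $\|b\|_{L^3(\Omega_\rho)}$, whereas you remove $E$ by hand and conclude via $\nabla w_\rho=0$; the arguments are equivalent.
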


\begin{proof}
We make the proof for $n = 3$. The case $n = 2$ can be proved in a similar way. We argue by contradiction, proceeding similarly to the proof of Theorem \ref{T2.2}. If the statement of the lemma is false, then there exists $0 < \rho < \text{esssup}_{x \in \Omega}(y_1(x) - y_2(x))$. Now, we set $z(x) = [(y_1(x) - y_2(x)) - \rho]^+$. We have that $z \in H_0^1(\Omega)$. We denote $\Omega_\rho = \{x \in \Omega : \nabla z(x) \neq 0\}$. Let us observe that we have
\[
\nabla z(x) = \left\{\begin{array}{cl} \nabla (y_1 - y_2)(x) & \text{if } (y_1 - y_2)(x) > \rho,\\0 & \text{otherwise,}\end{array}\right.\]
and
\[ z(x) = 0 \text{ if } (y_1 - y_2)(x) \le \rho.
\]
Using these facts, our assumptions on $b$ and $u_i$, and the monotonicity of $g$ we get
\begin{align*}
&0 \ge \int_\Omega (u_1 - u_2)z\, dx \\
 & = \int_\Omega\Big(\sum_{i, j = 1}^n a_{ij}(x)\partial_{x_i}(y_1 - y_2)\partial_{x_j}z + [b(x)\cdot\nabla (y_1 - y_2)] z \Big) dx\\
& + \int_\Omega[g(x,y_1) - g(x,y_2)]z\, dx \ge \int_{\Omega_\rho}\Big(\sum_{i, j = 1}^n a_{ij}(x)\partial_{x_i}z\partial_{x_j}z + [b(x)\cdot\nabla z] z\Big)\, dx\\
& \ge \Lambda\|\nabla z\|^2_{L^2(\Omega_\rho)^{n}} - \|b\|_{L^3(\Omega_\rho)^{n}}\|\nabla z\|_{L^2(\Omega_\rho)^{n}}\|z\|_{L^6(\Omega_\rho)}.
\end{align*}
Now, we continue as in the proof of Theorem \ref{T2.2} to achieve the contradiction
\[
\|b\|_{L^3(\Omega_\rho)^{n}} \ge \frac{\Lambda}{C_1} > 0\quad  \forall \rho < \text{ess sup}_{x \in \Omega}(y_1 - y_2)(x).
\]
\end{proof}

\begin{proof}[Proof of Theorem \ref{T2.7}] The uniqueness of a solution is an immediate consequence of Lemma \ref{L2.8}. The proof of existence is divided into three steps according to different assumptions on $f$. To simplify the presentation, we redefine $f = f - f(\cdot,0)$ and $u = u - f(\cdot,0) \in L^{\bar p}(\Omega)$. Then, due to Assumption 2, $f$ is continuous and monotone nondecreasing, $f(x,0) = 0$, and $f$ is dominated by a function $\phi_M \in L^{\bar p}(\Omega)$ in $\Omega \times [-M,+M]$ for every $M > 0$.

{\em Step 1.- Assume that there exists $\phi \in L^{\bar p}(\Omega)$ such that $|f(x,y)| \le \phi(x)$ in $\Omega \times \mathbb{R}$}. In this case, we consider the operator $T:C(\bar\Omega) \longrightarrow C(\bar\Omega)$ given by $Tw = y_w$, where $y_w$ is the solution of the problem
\[
\left\{\begin{array}{l} Ay + b(x)\cdot\nabla y + f(x,w) = u \text{ in } \Omega,\\ y = 0\text{ on } \Gamma.\end{array}\right.
\]
From Corollary \ref{C2.3} we have the existence and uniqueness of a solution $y_w \in H_0^1(\Omega) \cap C^{0,\mu}(\bar\Omega)$ for some $\mu \in (0,1)$. This solution satisfies
\[
\|y_w\|_{C^{0,\mu}(\bar\Omega)} \le C_{A,b}\|u-f(x,w)\|_{L^{\bar p}(\Omega)} \le C_{A,b}\Big(\|u\|_{L^{\bar p}(\Omega)} + \|\phi\|_{L^{\bar p}(\Omega)}\Big) =\rho.
\]
From the compactness of the embedding $C^{0,\mu}(\bar\Omega) \subset C(\bar\Omega)$, we deduce that $T$ is a compact operator applying the closed ball $\bar{B}_\rho(0)$ into itself. Hence, from Schauder's fixed point Theorem we infer the existence of a solution $y_u \in H_0^1(\Omega) \cap C(\bar\Omega)$ of \eqref{E1.1}. Moreover, \eqref{E2.14} follows from the above inequality and the redefinition of $u$ along with the fact that $A + b\cdot \nabla I:H_0^1(\Omega) \longrightarrow H^{-1}(\Omega)$ is an isomorphism.

{\em Step 2.- We relax the assumption of step 1 and now we only assume that there exists $\phi \in L^{\bar p}(\Omega)$ such that $f \ge \phi$ in $\Omega \times \mathbb{R}$.} For every integer $k \ge 1$ we set $f_k(x,y) = f(x,\min\{y,k\})$. Then, from \eqref{E2.13} we infer
\[
\phi(x) \le f_k(x,y) \le f(x,\text{proj}_{[-k,+k]}(y)) \le \phi_k(x).
\]
Hence, $|f_k(x,y)| \le \psi_k(x) = \max\{|\phi(x)|,|\phi_k(x)|\}$ with $\psi_k \in L^{\bar p}(\Omega)$. Then, we can apply Step 1 to deduce the existence of a function $y_k \in H_0^1(\Omega) \cap C(\bar\Omega)$ satisfying
\[
\left\{\begin{array}{l} Ay_k + b(x)\cdot\nabla y_k + f_k(x,y_k) = u \text{ in } \Omega,\\ y_k = 0\text{ on } \Gamma.\end{array}\right.
\]
Now, by Corollary \ref{C2.3} there exists a function $y \in H_0^1(\Omega) \cap C(\bar\Omega)$ solution of
\[
\left\{\begin{array}{l} Ay + b(x)\cdot\nabla y  = u - \phi \text{ in } \Omega,\\ y = 0\text{ on } \Gamma.\end{array}\right.
\]
Subtracting both equations we get
\[
A(y_k - y) + b(x)\cdot\nabla(y_k - y) = -f_k(x,y_k) + \phi \le 0.
\]
Then, Lemma \ref{L2.8} implies that $y_k \le y$ in $\Omega$. Therefore, if we take $k > \|y\|_{C(\bar\Omega)}$, we have that $f_k(x,y_k) = f(x,y_k)$, and therefore $y_k$ is solution of \eqref{E1.1}. The estimate \eqref{E2.14} follows from the bound for $y_k$ independently of $k$ and Assumption 2.

{\em Step 3.- The general case.} Let us define $f_k(x,y) = f(x,\text{proj}_{[-k,+k]}(y))$. Then, according to Assumption 2, there exists a function $\phi_k \in L^{\bar p}(\Omega)$ such that $|f_k(x,y)| \le \phi_k(x)$ in $\Omega \times \mathbb{R}$. Therefore, from Step 1 we know that there exists $y_k \in H_0^1(\Omega) \cap C(\bar\Omega)$ satisfying
\[
Ay_k + b(x)\cdot\nabla y_k + f_k(x,y_k) = u.
\]
Now, we take $z_1 \in H_0^1(\Omega) \cap C(\bar\Omega)$ satisfying $Az_1 + b(x)\cdot\nabla z_1 + f(x,z_1^+) = u$. The existence of such a function follows from Step 2 because $f(x,z_1^+) \ge 0$. This equation can be written as
\[
Az_1 + b(x)\cdot\nabla z_1 + f_k(x,z_1) = u + f_k(x,z_1)-f(x,z_1^+).
\]
From the monotonicity of $f$ with respect to the second variable, we have that $f_k(x,z_1) - f(x,z_1^+) \le 0$, and hence Lemma \ref{L2.8} implies that $z_1 \le y_k$ in $\Omega$.

Now, let $z_2 \in H_0^1(\Omega) \cap C(\bar\Omega)$ satisfy
\[
Az_2 + b(x)\cdot\nabla z_2 = u - f(x,-\|z_1\|_{C(\bar\Omega)}).
\]
The existence of such a function follows from Theorem \ref{T2.2} and Corollary \ref{C2.3}. Indeed, it is enough to observe that from \eqref{E2.13} with $M =  \|z_1\|_{C(\bar\Omega)}$ we deduce the existence of a function $\phi_M \in L^{\bar p}(\Omega)$ such that  $|f(x,-\|z_1\|_{C(\bar\Omega)})| \le \phi_M(x)$ for almost all $x \in \Omega$. Then, $ f(\cdot,-\|z_1\|_{C(\bar\Omega)}) \in L^{\bar p}(\Omega)$ holds.
Noticing that the equation satisfied by $y_k$ can be written
\[Ay_k + b(x)\cdot\nabla y_k  = u - f_k(x,y_k)\]
and that, using the fact that $y_k \ge z_1 \ge -\|z_1\|_{C(\bar\Omega)}$, we have $f_k(x,y_k(x)) \ge f(x,-\|z_1\|_{C(\bar\Omega)})$.
Then, subtracting the equations satisfied by $y_k$ and $z_2$ we get
\[
A(z_2 - y_k) + b(x)\cdot\nabla(z_2 - y_k) = f_k(x,y_k) - f(x,-\|z_1\|_{C(\bar\Omega)}) \ge 0.
\]
Therefore, $z_1 \le y_k \le z_2$ and, hence, $f_k(x,y_k) = f(x,y_k)$ for every $k > \max\{\|z_1\|_{C(\bar\Omega)},\|z_2\|_{C(\bar\Omega)}\}$, and consequently $y_k$ is solution of \eqref{E1.1} for $k$ large enough. As in the previous step, the estimate \eqref{E2.14} follows from the bound for $y_k$ independently of $k$.
\end{proof}

Now, we establish some additional regularity for the solutions of \eqref{E1.1}.

\begin{crllr}
There exists some $\mu \in (0,1)$ such that the solution $y_u$ of \eqref{E1.1} belongs to $C^{0,\mu}(\bar\Omega)$. Moreover, for every $M > 0$ there exists a constant $K_{f,\mu,M}$ such that
\[
\|y_u\|_{C^{0,\mu}(\bar\Omega)} \le K_{f,\mu,M}\quad \forall u \in L^{\bar p}(\Omega) \text{ satisfying }\|u\|_{L^{\bar p}(\Omega)}\le M.
\]
\label{C2.9}
\end{crllr}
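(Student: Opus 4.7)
The plan is to bootstrap from the $L^\infty$ bound already furnished by Theorem \ref{T2.7} and then invoke the linear H\"older regularity of Corollary \ref{C2.3} applied to the rewritten equation.

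More precisely, assume $\|u\|_{L^{\bar p}(\Omega)} \le M$. From the estimate \eqref{E2.14} I immediately obtain
\[
\|y_u\|_{C(\bar\Omega)} \le K_f\bigl(M + \|f(\cdot,0)\|_{L^{\bar p}(\Omega)} + 1\bigr) =: M_1,
\]
a constant depending only on $f$ and $M$. By Assumption 2, there exists $\phi_{M_1} \in L^{\bar p}(\Omega)$ with $|f(x,s)| \le \phi_{M_1}(x)$ whenever $|s| \le M_1$, so in particular
\[
|f(\cdot,y_u(\cdot))| \le \phi_{M_1}(\cdot) \quad \text{a.e.\ in } \Omega,
\]
and consequently $g_u := u - f(\cdot,y_u) \in L^{\bar p}(\Omega)$ with $\|g_u\|_{L^{\bar p}(\Omega)} \le M + \|\phi_{M_1}\|_{L^{\bar p}(\Omega)}$.

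Now I would rewrite \eqref{E1.1} as the purely linear problem
\[
A y_u + b(x)\cdot\nabla y_u = g_u \quad \text{in } \Omega, \qquad y_u = 0 \text{ on } \Gamma,
\]
which corresponds to the operator $\mathcal{A}$ from \eqref{E2.2} with $a_0 \equiv 0$ (Assumption 1 is satisfied trivially). Corollary \ref{C2.3} then yields the existence of $\mu \in (0,1)$ (depending only on $\Omega$, $A$, $b$, not on $u$) and a constant $C_{\mathcal{A},\mu}$ such that
\[
\|y_u\|_{C^{0,\mu}(\bar\Omega)} \le C_{\mathcal{A},\mu}\|g_u\|_{L^{\bar p}(\Omega)} \le C_{\mathcal{A},\mu}\bigl(M + \|\phi_{M_1}\|_{L^{\bar p}(\Omega)}\bigr) =: K_{f,\mu,M},
\]
which is the desired estimate.

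There is no real obstacle here: the uniqueness part of Corollary \ref{C2.3} ensures that the H\"older-regular solution of the linear problem coincides with $y_u$, and the whole argument just chains the $L^\infty$ bound of Theorem \ref{T2.7} with the growth condition \eqref{E2.13} and the linear regularity already established in Section \ref{S2.1}. The only point worth double-checking is that the constant $\mu$ returned by Corollary \ref{C2.3} is independent of $u$ (which it is, since it depends only on $\Omega$, the ellipticity constant, and the norms of $b$ and $a_0$).
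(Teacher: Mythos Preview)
Your proof is correct and follows exactly the same route as the paper: rewrite \eqref{E1.1} as $Ay_u + b\cdot\nabla y_u = u - f(\cdot,y_u)$, use \eqref{E2.14} to bound $\|y_u\|_{C(\bar\Omega)}$, apply \eqref{E2.13} to control $\|f(\cdot,y_u)\|_{L^{\bar p}(\Omega)}$, and invoke Corollary~\ref{C2.3}. The paper's proof is just a one-line compression of what you have written out in full.
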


\begin{proof}
Since $Ay_u + b\cdot\nabla y_u = u - f(x,y_u)$, this corollary follows from \eqref{E2.13}, \eqref{E2.14} and Corollary \ref{C2.3}.
\end{proof}

\begin{thrm}
Suppose that Assumption 1 holds, $a_{ij} \in C^{0,1}(\bar\Omega)$ for $1 \le i,j \le n$ and $a_0\in L^2(\Omega)$. We also suppose that Assumption 2 holds with $\bar p = 2$, and that $\Gamma$ is of class $C^{1,1}$ or $\Omega$ is convex. Then, for every $u \in L^2(\Omega)$, the equation \eqref{E1.1} has a unique solution $y_u \in H^2(\Omega) \cap H_0^1(\Omega)$. Moreover, for every $M > 0$ there exists a constant $C_{\mathcal{A},f,M}$ such that
\[
\|y_u\|_{H^2(\Omega)} \le C_{\mathcal{A},f,M}\quad \forall u \in L^{\bar p}(\Omega) \text{ satisfying }\|u\|_{L^{\bar p}(\Omega)}\le M.
\]
\label{T2.10}
\end{thrm}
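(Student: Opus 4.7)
The proof plan hinges on reducing Theorem \ref{T2.10} to an application of the linear isomorphism result Theorem \ref{T2.5} once one has a priori control on the solution in $L^\infty(\Omega)$.

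First, existence and uniqueness of a solution $y_u\in H_0^1(\Omega)\cap C(\bar\Omega)$ is immediate from Theorem \ref{T2.7} applied with $\bar p=2$, since Assumption 2 is assumed with $\bar p=2$ and $u\in L^2(\Omega)$. Moreover, the same theorem yields
\[
\|y_u\|_{C(\bar\Omega)}\le K_f\bigl(\|u\|_{L^2(\Omega)}+\|f(\cdot,0)\|_{L^2(\Omega)}+1\bigr)\le M',
\]
where $M'=M'(M)$ depends only on $M\ge\|u\|_{L^2(\Omega)}$ and on $f$. This step contains the only genuinely nontrivial input.

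Next, I would use this $L^\infty$ bound to convert the semilinear equation into a linear one with $L^2$ right-hand side. Because $|y_u(x)|\le M'$ for a.a. $x\in\Omega$, Assumption 2 (with $\bar p=2$) provides $\phi_{M'}\in L^2(\Omega)$ with $|f(x,y_u(x))|\le \phi_{M'}(x)$, so $f(\cdot,y_u)\in L^2(\Omega)$ with norm controlled by $\|\phi_{M'}\|_{L^2(\Omega)}$. Hence \eqref{E1.1} can be rewritten as
\[
Ay_u+b(x)\cdot\nabla y_u= u-f(x,y_u)\in L^2(\Omega),\qquad y_u\in H_0^1(\Omega).
\]

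Now I would invoke Theorem \ref{T2.5} applied to the operator $\mathcal{A}_0:=A+b\cdot\nabla$, i.e.\ the case $a_0\equiv 0$ (which trivially lies in $L^2(\Omega)$ and is nonnegative). All hypotheses of Theorem \ref{T2.5} are satisfied: $a_{ij}\in C^{0,1}(\bar\Omega)$, $b\in L^p(\Omega)^n$ as in Assumption 1, and $\Gamma$ is of class $C^{1,1}$ or $\Omega$ is convex. Thus $\mathcal{A}_0:H^2(\Omega)\cap H_0^1(\Omega)\to L^2(\Omega)$ is an isomorphism, from which we conclude $y_u\in H^2(\Omega)\cap H_0^1(\Omega)$ together with an estimate of the form
\[
\|y_u\|_{H^2(\Omega)}\le C_{\mathcal{A}_0}\bigl(\|u\|_{L^2(\Omega)}+\|f(\cdot,y_u)\|_{L^2(\Omega)}\bigr)\le C_{\mathcal{A}_0}\bigl(M+\|\phi_{M'}\|_{L^2(\Omega)}\bigr)=:C_{\mathcal{A},f,M},
\]
which is precisely the uniform bound claimed. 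The main (mild) obstacle is merely the bookkeeping that ensures the constant $C_{\mathcal{A},f,M}$ depends only on $M$, $f$ and the data of $\mathcal{A}$: this is automatic since $M'$ depends only on $M$ and $f$ via Theorem \ref{T2.7}, and $\phi_{M'}$ is supplied by Assumption 2 independently of $u$.
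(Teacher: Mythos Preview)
Your proposal is correct and matches the paper's approach: the paper's proof of Theorem~\ref{T2.10} consists of the single sentence ``This is an immediate consequence of Theorems~\ref{T2.5} and~\ref{T2.7},'' and what you have written is precisely the unpacking of that sentence---Theorem~\ref{T2.7} gives existence, uniqueness, and the uniform $C(\bar\Omega)$ bound, Assumption~2 with $\bar p=2$ then places $f(\cdot,y_u)$ in $L^2(\Omega)$, and Theorem~\ref{T2.5} applied to $\mathcal{A}_0=A+b\cdot\nabla$ yields the $H^2$ regularity with the stated estimate.
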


This is an immediate consequence of Theorems \ref{T2.5} and \ref{T2.7}. The following result on the continuous dependence of the state $y_u$ respect to $u$ will be useful to prove the existence of a solution for the control problem \Pb.

\begin{thrm}
Let $\{u_k\}_{k = 1}^\infty \subset L^{\bar p}(\Omega)$ be a sequence weakly converging to $u$ in $L^{\bar p}(\Omega)$. Then, under the assumptions of Theorem \ref{T2.7} we have that $y_{u_k} \to y_u$ strongly in $H_0^1(\Omega) \cap C(\bar\Omega)$.
\label{T2.11}
\end{thrm}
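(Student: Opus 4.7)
The plan is to combine a priori estimates, compactness, and G{\aa}rding's inequality. Since $u_k \rightharpoonup u$ in $L^{\bar p}(\Omega)$, the sequence $\{u_k\}$ is bounded in $L^{\bar p}(\Omega)$. By estimate \eqref{E2.14} and Corollary \ref{C2.9}, $\{y_{u_k}\}$ is bounded in $H_0^1(\Omega)\cap C^{0,\mu}(\bar\Omega)$ for some $\mu\in(0,1)$. Using reflexivity of $H_0^1(\Omega)$ together with the compact embedding $C^{0,\mu}(\bar\Omega)\hookrightarrow C(\bar\Omega)$ (Ascoli--Arzel\`a), I would extract a subsequence, not relabeled, with $y_{u_k}\rightharpoonup y^*$ in $H_0^1(\Omega)$ and $y_{u_k}\to y^*$ uniformly on $\bar\Omega$.

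Next I would identify $y^*=y_u$. The uniform convergence of $y_{u_k}$ and the Carath\'eodory property of $f$ yield $f(\cdot,y_{u_k})\to f(\cdot,y^*)$ pointwise a.e.; since $\|y_{u_k}\|_{C(\bar\Omega)}\le M$ for some $M>0$, Assumption~2 gives the integrable majorant $\phi_M\in L^{\bar p}(\Omega)$, so dominated convergence yields $f(\cdot,y_{u_k})\to f(\cdot,y^*)$ strongly in $L^{\bar p}(\Omega)$. Passing to the limit in $Ay_{u_k}+b\cdot\nabla y_{u_k}+f(\cdot,y_{u_k})=u_k$ (weakly in $H^{-1}(\Omega)$, using that the principal part is linear continuous and the last term converges strongly) I obtain $Ay^*+b\cdot\nabla y^*+f(\cdot,y^*)=u$. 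The uniqueness statement of Theorem~\ref{T2.7} then forces $y^*=y_u$, and since the limit does not depend on the subsequence the whole original sequence converges in the senses described.

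For the upgrade to strong convergence in $H_0^1(\Omega)$, set $w_k:=y_{u_k}-y_u$. Then $w_k\rightharpoonup 0$ in $H_0^1(\Omega)$, $w_k\to 0$ strongly in $L^2(\Omega)$ by Rellich, and in fact uniformly on $\bar\Omega$. Denoting $\mathcal{A}_0:=A+b\cdot\nabla$ (the operator of Lemma~\ref{L2.1} with $a_0\equiv 0$), one has
\[
\mathcal{A}_0 w_k = (u_k-u) - [f(\cdot,y_{u_k})-f(\cdot,y_u)].
\]
Testing with $w_k$ and invoking G{\aa}rding's inequality \eqref{E2.3} on the left,
\[
\frac{\Lambda}{4}\|w_k\|_{H_0^1(\Omega)}^2 - C_{\Lambda,b}\|w_k\|_{L^2(\Omega)}^2 \le \int_\Omega (u_k-u)w_k\,dx - \int_\Omega[f(\cdot,y_{u_k})-f(\cdot,y_u)]w_k\,dx.
\]
The monotonicity of $f$ in the second variable makes the last integral nonnegative, so it can be dropped, giving an upper bound by $\int_\Omega(u_k-u)w_k\,dx$. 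This term tends to zero since $\|w_k\|_{L^\infty(\Omega)}\to 0$ while $\{u_k-u\}$ is bounded in $L^1(\Omega)$. Combined with $\|w_k\|_{L^2(\Omega)}\to 0$, this yields $w_k\to 0$ in $H_0^1(\Omega)$.

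The main obstacle is the lack of coercivity of $\mathcal{A}_0$, which is precisely why G{\aa}rding's inequality \eqref{E2.3} must carry the argument rather than a direct energy estimate; the monotonicity of $f$ is what allows us to absorb the nonlinear contribution in the test-function identity and conclude strong $H_0^1$ convergence despite working only with weak convergence of the data $u_k$.
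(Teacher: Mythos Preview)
Your proof is correct. The first two paragraphs---a priori bounds from \eqref{E2.14} and Corollary~\ref{C2.9}, extraction of a subsequence, dominated convergence for $f(\cdot,y_{u_k})$, and identification of the limit via uniqueness---mirror the paper's argument exactly.

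The divergence comes in how you upgrade to strong $H_0^1$ convergence. The paper observes that the compact embedding $L^{\bar p}(\Omega)\hookrightarrow H^{-1}(\Omega)$ forces $u_k\to u$ strongly in $H^{-1}(\Omega)$; combined with the strong $L^{\bar p}$ convergence of $f(\cdot,y_{u_k})$, the right-hand side of the linear equation $(A+b\cdot\nabla)y_{u_k}=u_k-f(\cdot,y_{u_k})$ converges strongly in $H^{-1}(\Omega)$, and then Theorem~\ref{T2.2} (the isomorphism $A+b\cdot\nabla:H_0^1\to H^{-1}$) gives strong $H_0^1$ convergence in one line. You instead test the difference equation with $w_k$, invoke G{\aa}rding's inequality \eqref{E2.3}, use the monotonicity of $f$ to discard the nonlinear contribution, and bound $\int(u_k-u)w_k$ by $\|u_k-u\|_{L^1}\|w_k\|_{L^\infty}\to 0$. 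Both routes are sound; the paper's is slightly more economical (it never uses the sign of $f$ at this stage and exploits directly the invertibility already proved), while yours is a self-contained energy argument that makes the role of the monotone nonlinearity explicit and avoids citing the compact embedding into $H^{-1}$.
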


\begin{proof}
From Theorem \ref{T2.7} and Corollary \ref{C2.9} we know that $\{y_{u_k}\}_{k = 1}^\infty$ is bounded in $H_0^1(\Omega) \cap C^{0,\mu}(\bar\Omega)$. Hence, using the compactness of the embedding $C^{0,\mu}(\bar\Omega) \subset C(\bar\Omega)$, we deduce the existence of a subsequence, denoted in the same way, and an element $y \in H_0^1(\Omega) \cap C(\bar\Omega)$ such that  $y_{u_k} \rightharpoonup y$ in $H_0^1(\Omega)$ and $y_{u_k} \to y$ in $C(\bar\Omega)$. Now, setting $M = \max_{1 \le k <\infty}\|y_{u_k}\|_{C(\bar\Omega)}$, we deduce from \eqref{E2.13}
\begin{align*}
&|f(x,y_{u_k}(x))| \le |f(x,0)| + |f(x,y_{u_k}(x)) - f(x,0)|\\
& \le |f(x,0)| + C_{f,M}|y_{u_k}(x)| \le |f(x,0)| + C_{f,M}M.
\end {align*}
 As a consequence we have that $f(x,y_{u_k}) \to f(x,y)$ strongly in $L^{\bar p}(\Omega)$. Moreover, the compactness of the embedding $L^{\bar p}(\Omega) \subset H^{-1}(\Omega)$ implies that $u_k \to u$ strongly in $H^{-1}(\Omega)$. Hence, from Theorem \ref{T2.2} we infer that $y_{u_k} = (A + b\cdot\nabla I)^{-1}(u_k - f(x,y_{u_k})) \to (A + b\cdot\nabla I)^{-1}(u - f(x,y))$ strongly in $H_0^1(\Omega)$. Thus,  we have that $y = y_u$ and $y_{u_k} \to y_u$ strongly in $H_0^1(\Omega) \cap C(\bar\Omega)$. Since, this convergence holds for any converging subsequence, we deduce that the whole sequence converges as indicated in the statement of the theorem.
\end{proof}

To finish this section we analyze the differentiability of the relation $u \to y_u$. To this end, we make the following assumptions on $f$.

\textit{Assumption 3.} We assume that $f:\Omega \times \mathbb{R}\longrightarrow \mathbb{R}$ is a Carath\'eodory function of class $C^2$ with respect to the second variable satisfying:
\begin{equation}
f(\cdot,0)\in L^{\bar p}(\Omega)  \text{ and } \frac{\partial f}{\partial y}(x,y) \geq 0 \  \mbox{ a.e. in } \Omega \text{ and } \forall y \in \mathbb{R},
\label{E2.16}
\end{equation}
and for all $M>0$ there exists a constant $C_{f,M}>0$ such that
\begin{equation}
\left|\frac{\partial f}{\partial y}(x,y)\right|+
\left|\frac{\partial^2 f}{\partial y^2}(x,y)\right| \leq C_{f,M}
\mbox{ for a.e. } x \in \Omega \mbox{ and for all } |y| \leq M.
\label{E2.17}
\end{equation}
For every $M > 0$ and $\varepsilon > 0$ there exists $\delta > 0$, depending on $M$ and $\varepsilon$, such that
\begin{equation}
\left|\frac{\partial^2f}{\partial y^2}(x,y_2) -
\frac{\partial^2f}{\partial y^2}(x,y_1)\right| < \varepsilon \
\mbox{ if } |y_1|, |y_2| \leq M,\ |y_2 - y_1| \le \delta, \mbox{ for a.a. } x \in
\Omega.
\label{E2.18}
\end{equation}
Let us recall again that $\bar p$ stands for a real number bigger than $\frac{n}{2}$. It is obvious that Assumption 3 implies Assumption 2. Therefore, all the previous results remain valid if we replace Assumption 2 by Assumption 3.

\begin{thrm}
Let us suppose that Assumptions 1 and 3 hold. Then, the mapping $G:L^{\bar p}(\Omega) \longrightarrow H_0^1(\Omega) \cap C(\bar\Omega)$ given by $G(u) = y_u$ is well defined and of class $C^2$. Moreover, given $u, v \in L^{\bar p}(\Omega)$, $z_v = DG(u)v$ is the solution of
\begin{equation}
\left\{\begin{array}{l}\displaystyle Az + b(x)\cdot\nabla z + \frac{\partial f}{\partial y}(x,y_u)z = v \text{ in } \Omega,\\ z = 0\text{ on } \Gamma.\end{array}\right.
\label{E2.19}
\end{equation}
For $v_1, v_2 \in L^{\bar p}(\Omega)$ the second derivative $z_{v_1,v_2} = D^2G(u)(v_1,v_2)$ is the solution of the equation
\begin{equation}
\left\{\begin{array}{l}\displaystyle Az + b(x)\cdot\nabla z + \frac{\partial f}{\partial y}(x,y_u)z = -\frac{\partial^2f}{\partial y^2}(x,y_u)z_{v_1}z_{v_2} \text{ in } \Omega,\\ z = 0\text{ on } \Gamma,\end{array}\right.
\label{E2.20}
\end{equation}
where $z_{v_i} = DG(u)v_i$, $i = 1,2$.
\label{T2.12}
\end{thrm}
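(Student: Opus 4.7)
The plan is to apply the implicit function theorem in the Banach space $V := H_0^1(\Omega) \cap C(\bar\Omega)$, by recasting the state equation as the fixed-point identity
\[
F(u, y) := y - S(u - f(\cdot, y)) = 0, \qquad F : L^{\bar p}(\Omega) \times V \longrightarrow V,
\]
where $S : L^{\bar p}(\Omega) \to V$ denotes the bounded linear solution operator associated with $\mathcal{A}_0 := A + b(x)\cdot\nabla$, whose existence and boundedness follow from Theorem \ref{T2.2} (isomorphism onto $H^{-1}$) together with Corollary \ref{C2.3} (regularity upgrade to $V$). By Theorem \ref{T2.7}, the unique zero of $F(u,\cdot)$ in $V$ is $y_u = G(u)$, so it suffices to verify the IFT hypotheses.

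For $C^2$-regularity of $F$: since $S$ is linear and bounded and $F$ is affine in $u$, everything reduces to the Nemytskii operator $N_f : V \to L^{\bar p}(\Omega)$, $N_f(y) := f(\cdot, y(\cdot))$. The embedding $V \hookrightarrow L^\infty(\Omega)$, combined with the local boundedness of $\partial_y f, \partial^2_y f$ in \eqref{E2.17} and the uniform continuity of $\partial^2_y f$ in \eqref{E2.18}, gives via a standard Taylor/mean-value argument that $N_f$ is of class $C^2$ with $DN_f(y)h = \partial_y f(\cdot, y) h$ and $D^2 N_f(y)(h_1, h_2) = \partial^2_y f(\cdot, y) h_1 h_2$.

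The hardest step is to verify that, at any point $(u_0, y_{u_0})$, the partial derivative
\[
\partial_y F(u_0, y_{u_0}) z = z + S(a_0 z), \qquad a_0(x) := \partial_y f(x, y_{u_0}(x)) \in L^\infty(\Omega),\ a_0 \geq 0,
\]
is an isomorphism of $V$. \emph{Injectivity:} if $z + S(a_0 z) = 0$ then $\mathcal{A}_0 z + a_0 z = 0$, and Theorem \ref{T2.2} (applied with zero-order coefficient $a_0$, which satisfies Assumption 1 since $a_0 \in L^\infty(\Omega)$ and $a_0 \geq 0$) forces $z = 0$. \emph{Surjectivity:} given $w \in V$, Theorem \ref{T2.2} produces a unique $z \in H_0^1(\Omega)$ with $\mathcal{A}_0 z + a_0 z = \mathcal{A}_0 w$ in $H^{-1}(\Omega)$; this rewrites as $\mathcal{A}_0 (z - w) = -a_0 z$, and since $H_0^1(\Omega) \hookrightarrow L^6(\Omega)$ in dimension three (and into every finite $L^q(\Omega)$ in dimension two), $a_0 z$ lies in some $L^r(\Omega)$ with $r > n/2$, so Corollary \ref{C2.3} gives $z - w \in V$ and hence $z \in V$. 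The open mapping theorem then yields the isomorphism.

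The implicit function theorem supplies a local $C^2$ solution map around each $u_0$, and the global uniqueness from Theorem \ref{T2.7} glues these into a $C^2$ map $G : L^{\bar p}(\Omega) \to V$ with $G(u) = y_u$. Differentiating $F(u, G(u)) = 0$ once and twice, exploiting that $F$ is affine in $u$ so that the mixed and pure-$u$ second derivatives vanish, and applying $\mathcal{A}_0$ to the resulting identities, produces exactly the pointwise equations \eqref{E2.19} and \eqref{E2.20}. The principal obstacle is the surjectivity of $\partial_y F$ on the mixed-regularity space $V$: the $H^1$-level isomorphism from Theorem \ref{T2.2} must be combined with a short Stampacchia-type bootstrap via Corollary \ref{C2.3} to capture the $C(\bar\Omega)$-regularity of the preimage.
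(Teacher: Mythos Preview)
Your argument is correct and follows the same implicit-function-theorem strategy as the paper, relying on the same ingredients: the $C^2$ smoothness of the Nemytskii map $y\mapsto f(\cdot,y)$ from $C(\bar\Omega)$ into $L^{\bar p}(\Omega)$ (via Assumption~3) and the isomorphism property of the linearization (via Theorem~\ref{T2.2} and Corollary~\ref{C2.3}). The only difference is in the IFT setup: the paper introduces the graph-norm space $Y=\{y\in H_0^1(\Omega)\cap C(\bar\Omega):Ay+b\cdot\nabla y\in L^{\bar p}(\Omega)\}$ and works with $\mathcal{F}(y,u)=Ay+b\cdot\nabla y+f(\cdot,y)-u:Y\times L^{\bar p}(\Omega)\to L^{\bar p}(\Omega)$, so that surjectivity of $\partial_y\mathcal{F}$ is immediate from Corollary~\ref{C2.3} and the very definition of $Y$, whereas your fixed-point formulation on $V$ requires the short bootstrap you outline to recover the $C(\bar\Omega)$ regularity of the preimage.
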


\begin{proof}
The fact that $G$ is well defined is a straightforward consequence of Theorem \ref{T2.7}. To prove the differentiability we will use the implicit function theorem as follows. We consider the vector space
\[
Y = \{y \in H_0^1(\Omega) \cap C(\bar\Omega) : Ay + b\cdot\nabla y \in L^{\bar p}(\Omega)\}.
\]
This is a Banach space when we endow it with the norm
\[
\|y\|_Y = \|y\|_{H_0^1(\Omega)} + \|y\|_{C(\bar\Omega)} + \|Ay+b\cdot\nabla y\|_{L^{\bar p}(\Omega)}.
\]
Let us consider the mapping
\[
\mathcal{F}:Y \times L^{\bar p}(\Omega) \longrightarrow L^{\bar p}(\Omega)
\]
given by
\[
\mathcal{F}(y,u) = Ay + b(x)\cdot\nabla y + f(\cdot, y) - u.
\]
Using Assumption 3 and Corollary \ref{C2.3} it is easy to check that $f(\cdot,y) \in L^{\bar p}(\Omega)$ for every $y \in Y$, and the mapping $y \in Y \to f(\cdot,y) \in L^{\bar p}(\Omega)$ is of class $C^2$.
Hence, $\mathcal{F}$ is well defined and it is of class $C^2$. Moreover, the linear mapping
\[
\frac{\partial\mathcal{F}}{\partial y}(y,u):Y \longrightarrow L^{\bar p}(\Omega)
\]
\[
\frac{\partial\mathcal{F}}{\partial y}(y,u)z = Az + b(x)\cdot\nabla z + \frac{\partial f}{\partial y}(x,y)z
\]
is an isomorphism. Indeed, if we consider the operator $\mathcal{A}$ defined by \eqref{E2.2} with $a_0(x) = \frac{\partial f}{\partial y}(x,y(x))$, we have to prove that $\mathcal{A}:Y \longrightarrow L^{\bar p}(\Omega)$ is an isomorphism. From the definition of $Y$ and the above estimates, we know that $\mathcal{A}$ is well defined and continuous. From Theorem \ref{T2.2} we also deduce the existence of a unique solution $z \in H_0^1(\Omega)$ of the equation $\mathcal{A}z = v$ for every $v \in L^{\bar p}(\Omega) \subset H^{-1}(\Omega)$. In addition, from Corollary \ref{C2.3} we know that $z \in C(\bar\Omega)$. Hence, we have that $z \in Y$ and $\mathcal{A}$ is an isomorphism. Then, we can apply the implicit function theorem and deduce easily the theorem; see e.g. \cite[Proposition 16]{Casas-Mateos2017}.
\end{proof}

\section{Analysis of the optimal control problem}
\label{S3}
\setcounter{equation}{0}

In this section, we firstly prove the existence of a global solution $\bar u$ of the control problem. Then, we derive first and second order necessary optimality conditions for local solutions. Finally, we prove sufficient conditions for local optimality. In the whole section we suppose that Assumptions 1 and 3 are fulfilled.

\begin{thrm}
Let us assume that $L:\Omega \times \mathbb{R} \longrightarrow \mathbb{R}$ is a Carath\'edory function satisfying
\begin{equation}
\forall M > 0 \ \exists \psi_M \in L^1(\Omega) : |L(x,y)| \le \psi_M(x)\ \text{ for a.a. } x \in \Omega \text{ and } \forall |y| \le M.
\label{E3.1}
\end{equation}
Then, if $\uad$ is bounded in $L^2(\Omega)$ or $L$ is bounded from below, the control problem \Pb has at least one solution $\bar u$.
\label{T3.1}
\end{thrm}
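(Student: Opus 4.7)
The plan is to use the direct method of the calculus of variations. First I would establish that $\inf_{u \in \uad} J(u) > -\infty$, which splits into the two alternative hypotheses: if $L$ is bounded from below by some constant $-C_L$, then $J(u) \ge -C_L|\Omega| + \frac{\nu}{2}\|u\|^2_{L^2(\Omega)} \ge -C_L|\Omega|$; if instead $\uad$ is bounded in $L^2(\Omega)$, then Corollary \ref{C2.9} gives a uniform bound $M$ on $\|y_u\|_{C(\bar\Omega)}$ for all $u \in \uad$ (noting that $L^2(\Omega) \hookrightarrow L^{\bar p}(\Omega)$ after choosing $\bar p \in (n/2, 2]$, which is always possible), and then \eqref{E3.1} yields $\int_\Omega L(x,y_u)\,dx \ge -\|\psi_M\|_{L^1(\Omega)}$. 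In either case $J$ is bounded below on $\uad$, so a minimizing sequence $\{u_k\}_{k=1}^\infty \subset \uad$ exists.

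Next I would show that $\{u_k\}$ is bounded in $L^2(\Omega)$. If $\uad$ itself is bounded in $L^2(\Omega)$ this is immediate; if $L$ is bounded below by $-C_L$, then $\frac{\nu}{2}\|u_k\|^2_{L^2(\Omega)} \le J(u_k) + C_L|\Omega|$ is bounded because $J(u_k) \to \inf J < +\infty$ (finiteness of the infimum follows from evaluating $J$ at any admissible control, e.g. the truncation of $0$ into $[\alpha,\beta]$). Passing to a subsequence, $u_k \rightharpoonup \bar u$ weakly in $L^2(\Omega)$; since $\uad$ is convex and closed in $L^2(\Omega)$, it is weakly closed, so $\bar u \in \uad$. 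Because $\Omega$ is bounded and $\bar p \le 2$, the sequence is also bounded in $L^{\bar p}(\Omega)$ and the weak limit coincides with $\bar u$ there (uniqueness of distributional limits).

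Now I would invoke Theorem \ref{T2.11}: from $u_k \rightharpoonup \bar u$ in $L^{\bar p}(\Omega)$ one obtains $y_{u_k} \to y_{\bar u}$ strongly in $H^1_0(\Omega) \cap C(\bar\Omega)$. Setting $M = \sup_k \|y_{u_k}\|_{C(\bar\Omega)} < \infty$, the growth bound \eqref{E3.1} provides an $L^1$-dominating function $\psi_M$, and the Carath\'eodory property of $L$ together with the uniform convergence $y_{u_k} \to y_{\bar u}$ allow the dominated convergence theorem to give
\[
\int_\Omega L(x,y_{u_k}(x))\,dx \;\longrightarrow\; \int_\Omega L(x,y_{\bar u}(x))\,dx.
\]
For the control cost, weak lower semicontinuity of the $L^2$-norm yields $\|\bar u\|^2_{L^2(\Omega)} \le \liminf_{k\to\infty}\|u_k\|^2_{L^2(\Omega)}$.

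Combining these two facts, $J(\bar u) \le \liminf_{k\to\infty} J(u_k) = \inf_{u \in \uad} J(u)$, so $\bar u$ is a global minimizer. The only delicate points are verifying that $L^2$-boundedness of the minimizing sequence holds in both alternative hypotheses, and ensuring compatibility between the ambient space $L^2(\Omega)$ of controls and the exponent $\bar p$ required by Theorem \ref{T2.11}; both are handled by choosing $\bar p \in (n/2,2]$, which is admissible for $n \in \{2,3\}$.
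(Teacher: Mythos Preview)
Your proof is correct and follows essentially the same approach as the paper: extract a weakly convergent minimizing sequence in $L^2(\Omega)$, invoke Theorem~\ref{T2.11} for strong convergence of the states, and conclude via dominated convergence on the $L$-term plus weak lower semicontinuity of the $L^2$-norm. You supply more detail than the paper does (the explicit choice $\bar p \in (n/2,2]$ to ensure $L^2(\Omega)\hookrightarrow L^{\bar p}(\Omega)$, and the verification that $\inf J > -\infty$), but the skeleton is identical.
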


\begin{proof}
Let $\{u_k\}_{k = 1}^\infty \subset \uad$ be a minimizing sequence of \Pb. From the boundedness of $\uad$ or the lower boundedness of $L$ we deduce that $\{u_k\}_{k = 1}^\infty$ is bounded in $L^2(\Omega)$. Hence, we can take a subsequence, denoted in the same way, converging weakly in $L^2(\Omega)$ to some element $\bar u$. Since $\uad$ is weakly closed in $L^2(\Omega)$ we infer that $\bar u \in \uad$. Moreover, Theorem 2.5 implies that $y_{u_k} \to y_{\bar u}$ strongly in $H_0^1(\Omega) \cap C(\bar\Omega)$. Therefore, using the assumption \eqref{E3.1} along with Lebesgue's dominated convergence theorem, we get that $J(\bar u) \le \liminf_{k \to \infty}J(u_k) = \inf\Pb$ and, hence, $\bar u$ is a solution of \Pb.
\end{proof}

Before establishing the optimality conditions for \Pb, we study the differentiability of $J$. To this end we make the following assumptions on $L$.

\textit{Assumption 4.} We assume that $L:\Omega \times \mathbb{R}\longrightarrow \mathbb{R}$ is a Carath\'eodory function of class $C^2$ with respect to the second variable satisfying that $L(\cdot,0)\in L^1(\Omega)$ and for all $M>0$ there exist a function $\psi_M \in L^{\bar p}(\Omega)$ with $\bar p > \frac{n}{2}$ and a constant $C_{L,M}>0$ such that
\begin{equation}
\left|\frac{\partial L}{\partial y}(x,y)\right| \le \psi_M(x) \ \text{ and }\ \left|\frac{\partial^2 L}{\partial y^2}(x,y)\right| \leq C_{L,M} \mbox{ for a.e. }x \in \Omega\mbox{ and for all }|y| \leq M.\label{E3.2}
\end{equation}
In addition, for every $M > 0$ and $\varepsilon > 0$ there exists $\delta > 0$, depending on $M$ and $\varepsilon$, such that
\begin{equation}
\left|\frac{\partial^2L}{\partial y^2}(x,y_2) -
\frac{\partial^2L}{\partial y^2}(x,y_1)\right| < \varepsilon \
\mbox{ if } |y_1|, |y_2| \leq M,\ |y_2 - y_1| \le \delta, \mbox{ for a.a. } x \in
\Omega.
\label{E3.3}
\end{equation}
It is obvious that \eqref{E3.1} holds under Assumption 4. In the rest of the paper, we will suppose that Assumptions 1, 2 and  4 are fulfilled. Then, we have the following differentiability result.

\begin{thrm}
The functional $J$ is of class $C^2$. Moreover, given $u, v, v_1, v_2 \in L^2(\Omega)$ we have
\begin{align}
&J'(u)v = \int_\Omega (\varphi_u + \nu u)v\, dx,\label{E3.4}\\
&J''(u)(v_1,v_2) = \int_\Omega\Big[\frac{\partial^2L}{\partial y^2}(x,y_u) - \varphi_u\frac{\partial^2f}{\partial y^2}(x,y_u)\Big]z_{v_1}z_{v_2}\, dx + \nu\int_\Omega v_1v_2\, dx,\label{E3.5}
\end{align}
where $\varphi_u \in H_0^1(\Omega) \cap C(\bar\Omega)$ is the unique solution of the adjoint equation
\begin{equation}
\left\{\begin{array}{l}\displaystyle A^*\varphi - \div[b(x)\varphi] + \frac{\partial f}{\partial y}(x,y_u)\varphi = \frac{\partial L}{\partial y}(x,y_u) \text{ in } \Omega,\\ \varphi = 0\text{ on } \Gamma.\end{array}\right.
\label{E3.6}
\end{equation}
\label{T3.2}
\end{thrm}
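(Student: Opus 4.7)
The plan is to obtain the $C^2$ regularity of $J$ by composing the already-established $C^2$ control-to-state map $G\colon L^{\bar p}(\Omega)\to H_0^1(\Omega)\cap C(\bar\Omega)$ from Theorem \ref{T2.12} with the Nemytskii operator associated to $L$ and with the quadratic Tikhonov term, then to rewrite the resulting expressions in adjoint form. First, I would verify that under Assumption 4 the functional $y\mapsto \int_\Omega L(x,y(x))\,dx$ is of class $C^2$ from $C(\bar\Omega)$ into $\mathbb{R}$: pointwise Taylor expansion of $L$ combined with the bounds \eqref{E3.2}--\eqref{E3.3} and Lebesgue's dominated convergence gives the continuity of the first and second Fréchet derivatives on bounded subsets of $C(\bar\Omega)$. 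The quadratic term is trivially $C^\infty$ on $L^2(\Omega)$. Since $L^2(\Omega)\hookrightarrow L^{\bar p}(\Omega)$ is not always true, I would instead view $J$ as the composition of $u\mapsto(u,G(u))$ from $L^2(\Omega)$ to $L^2(\Omega)\times C(\bar\Omega)$ (using that $L^2(\Omega)\hookrightarrow L^{\bar p}(\Omega)$ when $\bar p\le 2$, or else restricting attention to $L^2(\Omega)\cap L^{\bar p}(\Omega)$ which coincides with $L^2(\Omega)$ on bounded domains for the relevant range of $\bar p$) with the $C^2$ outer map. This yields $J\in C^2$.

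Next, by the chain rule applied to \eqref{E1.1},
\[
J'(u)v=\int_\Omega\frac{\partial L}{\partial y}(x,y_u)\,z_v\,dx+\nu\int_\Omega u v\,dx,
\]
where $z_v=DG(u)v$ solves the linearized equation \eqref{E2.19}. The crucial point is the introduction of the adjoint state $\varphi_u$. To ensure it is well defined, I would apply Corollary \ref{C2.4} to the operator $\mathcal{A}^*$ with $a_0(x)=\frac{\partial f}{\partial y}(x,y_u(x))$; by \eqref{E2.17} and the $L^\infty$-bound on $y_u$ from Corollary \ref{C2.9} this coefficient lies in $L^\infty(\Omega)\subset L^q(\Omega)$ and is nonnegative by \eqref{E2.16}, and $\frac{\partial L}{\partial y}(x,y_u)\in L^{\bar p}(\Omega)$ by \eqref{E3.2}, so there is a unique $\varphi_u\in H_0^1(\Omega)\cap C(\bar\Omega)$ solving \eqref{E3.6}. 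Testing \eqref{E2.19} against $\varphi_u$ and \eqref{E3.6} against $z_v$ and subtracting gives $\int_\Omega \frac{\partial L}{\partial y}(x,y_u)\,z_v\,dx=\int_\Omega\varphi_u v\,dx$, which yields \eqref{E3.4}.

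For the second derivative, I would differentiate the identity $J'(u)v=\int_\Omega(\varphi_u+\nu u)v\,dx$ or, equivalently, differentiate the pre-adjoint expression directly. The chain rule applied to $u\mapsto\int_\Omega L(x,y_u)$ produces
\[
\int_\Omega\frac{\partial^2 L}{\partial y^2}(x,y_u)z_{v_1}z_{v_2}\,dx+\int_\Omega\frac{\partial L}{\partial y}(x,y_u)z_{v_1,v_2}\,dx+\nu\int_\Omega v_1v_2\,dx,
\]
where $z_{v_1,v_2}=D^2G(u)(v_1,v_2)$ solves \eqref{E2.20}. To eliminate the $z_{v_1,v_2}$ term, I would test \eqref{E2.20} against $\varphi_u$ and \eqref{E3.6} against $z_{v_1,v_2}$ and subtract, obtaining
\[
\int_\Omega\frac{\partial L}{\partial y}(x,y_u)z_{v_1,v_2}\,dx=-\int_\Omega\varphi_u\,\frac{\partial^2 f}{\partial y^2}(x,y_u)z_{v_1}z_{v_2}\,dx.
\]
Substituting this identity yields \eqref{E3.5}.

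The main obstacle is purely the bookkeeping associated with the duality pairings: one must justify that $z_{v_1,v_2}$ and $\varphi_u$ are regular enough to multiply, test against each other, and use as test functions in both \eqref{E2.19}--\eqref{E2.20} and \eqref{E3.6}. Since Corollaries \ref{C2.3} and \ref{C2.4} provide the $H_0^1\cap C(\bar\Omega)$ regularity and the right-hand sides $\frac{\partial^2 f}{\partial y^2}(x,y_u)z_{v_1}z_{v_2}$ belong to $L^{\bar p}(\Omega)$ by \eqref{E2.17} together with the uniform bound on $y_u$ and the continuous embedding of $z_{v_i}$ into $C(\bar\Omega)$, these integration-by-parts steps are all rigorous, and the proof concludes.
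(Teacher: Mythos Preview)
Your proposal is correct and follows the same approach as the paper, whose proof is itself very terse: it cites Theorem \ref{T2.12}, Assumption 4, and the chain rule for the $C^2$ regularity, declares the derivation of \eqref{E3.4}--\eqref{E3.5} ``standard,'' and justifies well-posedness of the adjoint equation via Corollary \ref{C2.4} exactly as you do. One small slip in your parenthetical on embeddings: the claim that $L^2(\Omega)\cap L^{\bar p}(\Omega)$ coincides with $L^2(\Omega)$ is false when $\bar p>2$ (it equals $L^{\bar p}(\Omega)$); the clean fix is simply that $n\le 3$ gives $n/2<2$, so one may always take $\bar p\in(n/2,2]$ and use the continuous embedding $L^2(\Omega)\hookrightarrow L^{\bar p}(\Omega)$ directly.
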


\begin{proof}
The $C^2$ differentiability of $J$ is an immediate consequence of Theorem \ref{T2.12}, Assumption 4 and the chain rule. Moreover, the derivation of the formulas \eqref{E3.4} and \eqref{E3.5} is standard.
The existence of a unique $\varphi_u \in H_0^1(\Omega) \cap C(\bar\Omega)$ follows from Corollary \ref{C2.4} and the facts that \eqref{E2.17} along with $y_u \in C(\bar\Omega)$ implies that $a_0 = \frac{\partial f}{\partial y}(\cdot,y_u) \in L^\infty(\Omega)$ and assumption \eqref{E3.2} implies that $\frac{\partial L}{\partial y}(\cdot,y_u) \in L^{\bar p}(\Omega)$.
\end{proof}

Since \Pb is not a convex problem, we consider local solutions of \Pb as well. Let us state precisely the different concepts of local solution.

\begin{dfntn}
We say that $\bar u\in \uad$ is an $L^r(\Omega)$-weak local minimum of \Pb with $r\in [1,+\infty]$, if there exists some $\varepsilon >0$ such that
\[
J(\bar u)\leq J(u)\quad\forall u\in \uad\mbox{ with }\|\bar u-u\|_{L^r(\Omega)}\leq \varepsilon.
\]
An element $\bar u\in\uad$ is said a strong local minimum of \Pb if there exists some $\varepsilon>0$ such that
\[
J(\bar u)\leq J(u)\quad \forall u\in \uad\mbox{ with }\|y_{\bar u}-y_u\|_{L^\infty(\Omega)}  \leq \varepsilon.
\]
We say that $\bar u\in \uad$ is a strict (weak or strong) local minimum if the above inequalities are strict for $u\neq \bar
u$.
\label{D3.3}
\end{dfntn}

As far as we know, the notion of strong local solutions in the framework of control theory was introduced in \cite{BBS2014} for the first time; see also \cite{BS2016}. We analyze the relationships among these concepts in the followin lemma.

\begin{lmm}
The following properties  hold:

If $\uad$ is bounded in $L^2(\Omega)$, then
\begin{enumerate}
\item $\bar u$ is an $L^1(\Omega)$-weak local minimum of \Pb if and only if it is an $L^r(\Omega)$-weak local minimum of \Pb for every $r\in (1,+\infty)$.
\item If $\bar u$ is an $L^r(\Omega)$-weak local minimum of \Pb for some $r < +\infty$, then it is an $L^\infty(\Omega)$-weak local minimum of \Pb.
\item $\bar u$ is a strong local minimum of \Pb if and only if it is an $L^r(\Omega)$-weak local minimum of \Pb for all $r \in [1,\infty)$.
\end{enumerate}

If $\uad$ is not bounded in $L^2(\Omega)$, then
\begin{enumerate}
\item If $\bar u$ is an $L^2(\Omega)$-weak local solution and $L$ is bounded from below, then $\bar u$ is an $L^1(\Omega)$-weak local solution.

\item If $\bar u$ is an $L^p(\Omega)$-weak local solution, then $\bar u$ is an $L^q(\Omega)$-weak local solution for every $p  < q \le \infty$.

\item $\bar u$ is an $L^2(\Omega)$-weak local solution if and only if it is a strong local solution.
\end{enumerate}
\label{L3.4}
\end{lmm}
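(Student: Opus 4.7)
The plan combines three tools: Hölder and interpolation inequalities on the finite-measure domain $\Omega$ to compare the various $L^r$-norms; the continuous dependence result Theorem \ref{T2.11}, which upgrades weak $L^{\bar p}$-convergence of controls into strong $C(\bar\Omega)$-convergence of states; and the Hilbert space identity ``weak convergence plus convergence of norms implies strong convergence'' in $L^2(\Omega)$, applied to the Tikhonov term $\frac{\nu}{2}\|u\|^2_{L^2}$ of $J$ (so $\nu>0$ is essential). A preliminary observation used throughout Part A is that, since $\alpha,\beta$ are constants, $\uad$ is bounded in $L^2(\Omega)$ if and only if $\alpha,\beta$ are both finite, in which case $\uad$ is actually bounded in $L^\infty(\Omega)$ by $K:=\max\{|\alpha|,|\beta|\}$.

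The easy halves of all six items follow from the Hölder estimate $\|u-\bar u\|_{L^p}\le|\Omega|^{1/p-1/q}\|u-\bar u\|_{L^q}$ for $p\le q$: a trivial shrinking of the test radius yields the forward implication in A.1, item A.2 and item B.2. The remaining easy direction of A.1, namely ``$L^r$-weak $\Rightarrow L^1$-weak'' for $r>1$, follows from the interpolation
\[
\|u-\bar u\|_{L^r}\le\|u-\bar u\|^{1/r}_{L^1}\|u-\bar u\|^{1-1/r}_{L^\infty}\le (2K)^{1-1/r}\|u-\bar u\|^{1/r}_{L^1},
\]
which is available precisely because of the $L^\infty$-bound on $\uad$. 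The implications ``strong $\Rightarrow L^r$-weak'' in A.3 and ``strong $\Rightarrow L^2$-weak'' in B.3 reduce to showing that $\|u-\bar u\|_{L^r}$ (respectively $L^2$) small forces $\|y_u-y_{\bar u}\|_{C(\bar\Omega)}$ small, which I would prove by contradiction using the embedding $L^r\hookrightarrow L^{\bar p}$ (either directly if $r\ge\bar p$, or through the interpolation against the $L^\infty$-bound if $r<\bar p$, in Part A; or $L^2\hookrightarrow L^{\bar p}$ by choosing $\bar p\le 2$, compatible with $\bar p>n/2$ for $n=2,3$, in Part B) combined with Theorem \ref{T2.11}.

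The three genuinely nontrivial implications are ``$L^r$-weak for all $r<\infty$ $\Rightarrow$ strong'' in A.3, item B.1, and ``$L^2$-weak $\Rightarrow$ strong'' in B.3. All three are handled by a single absurdum argument. Assuming the conclusion fails, I produce $u_k\in\uad$ with $J(u_k)<J(\bar u)$ and either $\|y_{u_k}-y_{\bar u}\|_{L^\infty}\to 0$ (in A.3 and B.3) or $\|u_k-\bar u\|_{L^1}\to 0$ (in B.1). Boundedness of $\{u_k\}$ in $L^2(\Omega)$ is automatic in A.3, follows from $J(u_k)<J(\bar u)$ together with the lower bound on $L$ in B.1, and in B.3 from $J(u_k)<J(\bar u)$ combined with the uniform bound on $\int_\Omega L(\cdot,y_{u_k})\,dx$ provided by dominated convergence (since $y_{u_k}\to y_{\bar u}$ in $L^\infty$ and Assumption 4 applies). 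Extracting a subsequence with $u_k\rightharpoonup u^*\in\uad$ weakly in $L^2$, I identify $u^*=\bar u$: in A.3 and B.3 via Theorem \ref{T2.11}, which yields $y_{u^*}=y_{\bar u}$, together with the injectivity of the control-to-state map (the control is recovered from the state through $u=Ay+b\cdot\nabla y+f(x,y)$); and in B.1 by uniqueness of the weak $L^1$-limit, since $u_k\to\bar u$ strongly in $L^1$. Dominated convergence then gives $\int_\Omega L(\cdot,y_{u_k})\,dx\to\int_\Omega L(\cdot,y_{\bar u})\,dx$, which combined with $\limsup_{k\to\infty}J(u_k)\le J(\bar u)$ and the weak lower semicontinuity of $\|\cdot\|^2_{L^2}$ forces $\|u_k\|_{L^2}\to\|\bar u\|_{L^2}$. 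The Hilbert-space identity then promotes this to strong convergence $\|u_k-\bar u\|_{L^2}\to 0$, which (through Hölder or Theorem \ref{T2.11}) eventually places $u_k$ inside the relevant $L^r$- or strong-neighborhood of $\bar u$ and contradicts $J(u_k)<J(\bar u)$.

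The hard part is precisely this last ``weak plus norm $\Rightarrow$ strong'' step: a priori, $J(u_k)<J(\bar u)$ together with the $L^\infty$-convergence of the states yields only $\limsup\|u_k\|_{L^2}\le\|\bar u\|_{L^2}$, which does not suffice to conclude convergence of the controls in any topology finer than weak $L^2$. The simultaneous use of weak lower semicontinuity (to provide $\liminf\|u_k\|_{L^2}\ge\|\bar u\|_{L^2}$) and the Hilbert space structure of $L^2(\Omega)$ is what closes the argument, and the positivity of the Tikhonov parameter $\nu$ is essential throughout.
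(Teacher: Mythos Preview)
Your argument is correct. The paper itself does not prove Lemma~\ref{L3.4} but refers to \cite{Casas-Mateos2020} and \cite{Casas-Troltzsch2019B}; the only step it singles out is that ``strong $\Rightarrow$ $L^2$-weak'' rests on the Lipschitz estimate $\|y_u-y_{\bar u}\|_{C(\bar\Omega)}\le C\|u-\bar u\|_{L^2(\Omega)}$ for $u$ in an $L^2$-ball around $\bar u$, obtained from Lemma~\ref{L3.5} through a comparison argument for $DG(\hat u)$. You bypass Lemma~\ref{L3.5} entirely and obtain the same implication (and the analogous ones in A.3) by a sequential contradiction based only on Theorem~\ref{T2.11}, i.e.\ on weak-to-strong continuity of the control-to-state map rather than Lipschitz continuity. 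This is more elementary and avoids the somewhat delicate proof of Lemma~\ref{L3.5}; the price is that you get no quantitative modulus, whereas the paper's Lipschitz bound is reused later (e.g.\ in Corollary~\ref{C3.9}). For the genuinely hard converses (``$L^r$-weak for all $r$ $\Rightarrow$ strong'', item B.1, and ``$L^2$-weak $\Rightarrow$ strong'') your weak-plus-norm argument in $L^2(\Omega)$, exploiting $\nu>0$ to recover strong $L^2$-convergence of the controls, is precisely the mechanism used in the cited references.
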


The reader is referred to \cite{Casas-Mateos2020} and \cite{Casas-Troltzsch2019B}  for the proof of this lemma. To deduce that any strong local solution is an $L^2(\Omega)$-weak local solution the following estimate is used
\[
\|y_u - y_{\bar u}\|_{C(\bar\Omega)} \le C\|u - \bar u\|_{L^2(\Omega)} \quad \forall u \in B_\varepsilon(\bar u),
\]
where $B_\varepsilon(\bar u)$ is the ball in $L^2(\Omega)$. This inequality follows from the next result.

\begin{lmm}
Let $\mathcal{U}$ be a bounded subset of $L^{\bar p}(\Omega)$. Then, there exists a constant $M_\mathcal{U}$ such that
\[
\|y_u - y_v\|_{H_0^1(\Omega)} + \|y_u - y_v\|_{C(\bar\Omega)} \le M_\mathcal{U}\|u - v\|_{L^{\bar p}(\Omega)}\ \ \forall u \in \mathcal{U}.
\]
\label{L3.5}
\end{lmm}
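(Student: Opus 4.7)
The plan is to linearize the difference $w := y_u - y_v$ and invoke the isomorphism results from Section~\ref{S2.1} with constants uniform over the family. By Corollary~\ref{C2.9}, there exists $M_{\mathcal{U}}$ depending only on the $L^{\bar p}(\Omega)$-bound of $\mathcal{U}$ such that $\|y_u\|_{C(\bar\Omega)} \le M_{\mathcal{U}}$ for every $u \in \mathcal{U}$. Subtracting the equations for $y_u$ and $y_v$ and applying the mean value theorem yields
\[
A w + b(x)\cdot\nabla w + a_{u,v}(x)\, w = u - v \text{ in } \Omega, \quad w = 0 \text{ on } \Gamma,
\]
where $a_{u,v}(x) := \int_0^1 \frac{\partial f}{\partial y}\bigl(x, y_v(x) + t\, w(x)\bigr)\, dt$. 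By Assumption~3, $0 \le a_{u,v}(x) \le C_{f, M_{\mathcal{U}}}$ a.e.\ in $\Omega$, uniformly in $u, v \in \mathcal{U}$. It therefore suffices to prove that the operators $\mathcal{A}_{u,v} := A + b\cdot\nabla + a_{u,v}$ from \eqref{E2.2} admit inverses that are uniformly bounded from $L^{\bar p}(\Omega)$ into $H_0^1(\Omega) \cap C(\bar\Omega)$.

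I would establish this uniform bound by a compactness--contradiction argument. Assuming it fails, one extracts sequences $u_k, v_k \in \mathcal{U}$ with $N_k := \|w_k\|_{H_0^1(\Omega)} + \|w_k\|_{C(\bar\Omega)} > k\,\|u_k - v_k\|_{L^{\bar p}(\Omega)}$, where $w_k := y_{u_k} - y_{v_k}$. Normalizing $\tilde w_k := w_k / N_k$ and $\tilde f_k := (u_k - v_k) / N_k$ gives $\|\tilde f_k\|_{L^{\bar p}(\Omega)} < 1/k$, $\|\tilde w_k\|_{H_0^1(\Omega)} + \|\tilde w_k\|_{C(\bar\Omega)} = 1$ and $\mathcal{A}_{u_k, v_k} \tilde w_k = \tilde f_k$. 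After extracting subsequences, $\tilde w_k \rightharpoonup \tilde w$ in $H_0^1(\Omega)$ and strongly in $L^2(\Omega)$ by Rellich, while $a_{u_k,v_k}$ converges weakly-$\ast$ in $L^\infty(\Omega)$ to some nonnegative $a \in L^\infty(\Omega)$.

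Next, rewriting the equation as $A \tilde w_k + b \cdot \nabla \tilde w_k = \tilde f_k - a_{u_k,v_k}\tilde w_k$ shows that its right-hand side is bounded in $L^{\bar p}(\Omega)$, because $\|\tilde w_k\|_{C(\bar\Omega)} \le 1$ and $a_{u_k,v_k}$ is uniformly bounded in $L^\infty(\Omega)$. Corollary~\ref{C2.3} applied to the \emph{fixed} operator $A + b \cdot \nabla$ (i.e.\ the case $a_0 \equiv 0$) therefore produces a uniform bound on $\|\tilde w_k\|_{C^{0,\mu}(\bar\Omega)}$, and Arzel\`a--Ascoli supplies a subsequence with $\tilde w_k \to \tilde w$ in $C(\bar\Omega)$. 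Combining this strong convergence with the weak-$\ast$ convergence of $a_{u_k,v_k}$ lets one pass to the limit and obtain $\mathcal{A}_a \tilde w = 0$; since $a \ge 0$, Theorem~\ref{T2.2} forces $\tilde w = 0$. Finally, G\r{a}rding's inequality from Lemma~\ref{L2.1} (whose constants are independent of the nonnegative zero-order coefficient) together with $\tilde w_k \to 0$ in $L^2(\Omega)$ and $\langle \tilde f_k, \tilde w_k \rangle \to 0$ yields $\tilde w_k \to 0$ in $H_0^1(\Omega)$, contradicting $\|\tilde w_k\|_{H_0^1(\Omega)} + \|\tilde w_k\|_{C(\bar\Omega)} = 1$.

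The main obstacle is the interplay between the two norms in the estimate: the $H_0^1(\Omega)$ side is handled by G\r{a}rding's inequality with uniform constants, but the $C(\bar\Omega)$ compactness needed to close the contradiction does not follow from the $H_0^1(\Omega)$-bound alone. Splitting $\mathcal{A}_{u,v}$ into the fixed operator $A + b\cdot\nabla$ and the bounded, nonnegative zero-order perturbation $a_{u,v}$ is what allows the H\"older regularity of Corollary~\ref{C2.3} to be invoked with a constant independent of $u$ and $v$.
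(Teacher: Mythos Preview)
Your proof is correct but follows a genuinely different route from the paper's. The paper first invokes Theorem~\ref{T2.12} and the mean value theorem for the $C^1$ map $G$ to reduce the estimate to a uniform bound on $\|DG(\hat u)\|_{\mathcal{L}(L^{\bar p}(\Omega),H_0^1(\Omega)\cap C(\bar\Omega))}$ over $\hat u \in \mathcal{U}$. To obtain that bound it argues \emph{constructively} via the comparison principle of Lemma~\ref{L2.8}: for $z = DG(\hat u)v$ it introduces the auxiliary function $z_0$ solving $A z_0 + b\cdot\nabla z_0 = |v|$, shows $z_0 \ge 0$, and then proves $-z_0 \le z \le z_0$ by applying Lemma~\ref{L2.8} to $z_0 \pm z$. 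This gives $\|z\|_{C(\bar\Omega)} \le \|z_0\|_{C(\bar\Omega)}$ with $z_0$ governed by the \emph{fixed} operator $A + b\cdot\nabla$, so Corollary~\ref{C2.3} yields an explicit constant independent of $\hat u$; the $H_0^1$ bound then follows directly from Theorem~\ref{T2.2}. Your compactness--contradiction argument bypasses both Theorem~\ref{T2.12} and the comparison principle, relying instead on G\r{a}rding's inequality with constants independent of the nonnegative zero-order term and on H\"older compactness after shifting $a_{u,v}\tilde w_k$ to the right-hand side. The paper's approach is more in the spirit of the rest of the section (comparison-based) and produces an explicit $M_{\mathcal{U}}$, whereas yours is a clean functional-analytic alternative that avoids the monotonicity structure at this step but gives no explicit constant.
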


\begin{proof}
Without loss of generality, we can suppose that $\mathcal{U}$ is convex. Otherwise, we replace it by its convex hull, which is also a bounded set.
Given $u,v\in\mathcal{U}$, from Theorem \ref{T2.12} and the mean value theorem we have
\[
\|y_u - y_v\|_{H_0^1(\Omega)} + \|y_u - y_v\|_{C(\bar\Omega)} \le \sup_{\hat u \in \mathcal{U}}\|DG(\hat u)\|_{\mathcal{L}(L^{\bar p}(\Omega),H_0^1(\Omega)\cap C(\bar\Omega))}\|u - v\|_{L^{\bar p}(\Omega)}.
\]
Then, it is enough to prove that $\|DG(\hat u)\|_{\mathcal{L}(L^{\bar p}(\Omega),H_0^1(\Omega)\cap C(\bar\Omega))}$ is bounded by a constant $M_\mathcal{U}$ for every $\hat u \in \mathcal{U}$. From Corollary \ref{C2.9}, we know that $M = \sup\{\|y_u\|_{C(\bar\Omega)} : u \in \mathcal{U}\}<+\infty$. Hence, from Assumption 3 we have
\[
\left|\frac{\partial f}{\partial y}(x,y_{u}(x))\right| \le C_{f,M}\ \text{ for a.a. } x \in \Omega \text{ and } \forall u \in \mathcal{U}.
\]
Now, given $u \in \mathcal{U}$ arbitrary and $v \in L^{\bar p}(\Omega)$ with $\|v\|_{L^{\bar p}(\Omega)} = 1$, we denote by $z$ and $z_0$ the elements of $H_0^1(\Omega) \cap C(\bar\Omega)$ satisfying the equations
\begin{align*}
&\left\{\begin{array}{l}\displaystyle Az + b(x)\cdot\nabla z + \frac{\partial f}{\partial y}(x,y_{u})z = v \text{ in } \Omega,\\ z = 0\text{ on } \Gamma.\end{array}\right.\\
&\left\{\begin{array}{l}\displaystyle Az_0 + b(x)\cdot\nabla z_0  = |v| \text{ in } \Omega,\\ z = 0\text{ on } \Gamma.\end{array}\right.
\end{align*}
Taking in Lemma \ref{L2.8} $g = 0$, $u_1 = 0$ and $u_2 = |v|$, we deduce that $z_0 \ge 0$. Now, subtracting and adding the equations satisfied by $z$ and $z_0$, and using the monotonicity of $f$ we get
\begin{align*}
&A(z_0 - z) + b(x)\cdot\nabla(z_0 - z) + \frac{\partial f}{\partial y}(x,y_{u})(z_0 - z) = \frac{\partial f}{\partial y}(x,y_{u})z_0 + |v| - v \ge 0 \text{ in } \Omega,\\
&A(z_0 + z) + b(x)\cdot\nabla(z_0 + z) + \frac{\partial f}{\partial y}(x,y_{u})(z_0 + z) = \frac{\partial f}{\partial y}(x,y_{u})z_0 + |v| + v \ge 0 \text{ in } \Omega.
\end{align*}
Using again Lemma \ref{L2.8} we infer that $z_0 - z \ge 0$ and $z_0 + z \ge 0$ in $\Omega$, or equivalently $-z_0 \le z \le z_0$ in $\Omega$. Thus, we get with \eqref{E2.4}
\[
\|z\|_{C(\bar\Omega)} \le \|z_0\|_{C(\bar\Omega)} \le C_{A,b}\|v\|_{L^p(\Omega)} = C_{A,b}.
\]

On the other hand, from Theorem \ref{T2.2} we deduce the existence of a constant $C$ independent of $v$ such that
\begin{align*}
&\|z\|_{H_0^1(\Omega)} \le C\left(\|v\|_{L^p(\Omega)} + \left\|\frac{\partial f}{\partial y}(x,y_{u})z\right\|_{L^p(\Omega)}\right)\\
&\le C\left(1 + \left\|\frac{\partial f}{\partial y}(x,y_{u})\right\|_{L^\infty(\Omega)}\|z\|_{C(\bar\Omega)}|\Omega|^{\frac{1}{p}}\right) \le C(1 + C_{f,M}C_{A,b}|\Omega|^{\frac{1}{p}}).
\end{align*}

Hence, we have
\[
\|z\|_{H_0^1(\Omega)} + \|z\|_{C(\bar\Omega)} \le C(1 + C_{f,M}C_{A,b}|\Omega|^{\frac{1}{p}}) + C_{A,b} = M_{\mathcal{U}}.
\]
Since $u$ and $v$ are arbitrary, we conclude that $\|DG(\hat u)\|_{\mathcal{L}(L^{\bar p}(\Omega),H_0^1(\Omega)\cap C(\bar\Omega))} \le M_\mathcal{U}$, and the lemma follows.
\end{proof}

Now, we establish the first order optimality conditions.

\begin{thrm}
Let $\bar u$ be a local solution of \Pb in any of the previous senses. Then there exist two unique elements $\bar y, \bar\varphi \in H_0^1(\Omega) \cap C(\bar\Omega)$ such that
\begin{align}
& \left\{\begin{array}{l} A\bar y + b(x)\cdot\nabla\bar y + f(x,\bar y) = \bar u \text{ in } \Omega,\\ \bar y = 0\text{ on } \Gamma,\end{array}\right.
\label{E3.7}\\
&\left\{\begin{array}{l}\displaystyle A^*\bar\varphi - \div[b(x)\bar\varphi] + \frac{\partial f}{\partial y}(x,\bar y)\bar\varphi = \frac{\partial L}{\partial y}(x,\bar y) \text{ in } \Omega,\\ \bar\varphi = 0\text{ on } \Gamma,\end{array}\right.
\label{E3.8}\\
&\int_\Omega(\bar\varphi + \nu \bar u)(u - \bar u)\, dx \ge 0 \quad \forall u \in \uad. \label{E3.9}
\end{align}
\label{T3.6}
\end{thrm}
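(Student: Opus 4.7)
The plan is to synthesize the three conclusions from ingredients already established. First, set $\bar y := y_{\bar u}$; its existence, uniqueness and regularity in $H_0^1(\Omega) \cap C(\bar\Omega)$ giving \eqref{E3.7} is precisely Theorem \ref{T2.7}. Second, since $\bar y \in C(\bar\Omega)$, Assumption 3 yields $\frac{\partial f}{\partial y}(\cdot,\bar y) \in L^\infty(\Omega)$ with nonnegative values, and Assumption 4 yields $\frac{\partial L}{\partial y}(\cdot,\bar y) \in L^{\bar p}(\Omega)$. Applying Corollary \ref{C2.4} to the adjoint operator with $a_0 = \frac{\partial f}{\partial y}(\cdot,\bar y)$ produces a unique $\bar\varphi \in H_0^1(\Omega) \cap C(\bar\Omega)$ solving \eqref{E3.8}; this $\bar\varphi$ is exactly the adjoint state $\varphi_{\bar u}$ of Theorem \ref{T3.2}.

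The variational inequality \eqref{E3.9} is then obtained by the classical convex-combination argument. For arbitrary $u \in \uad$, convexity of $\uad$ gives $u_t := \bar u + t(u - \bar u) \in \uad$ for every $t \in [0,1]$. Since $u - \bar u \in L^2(\Omega)$, one has $\|u_t - \bar u\|_{L^2(\Omega)} = t\|u - \bar u\|_{L^2(\Omega)} \to 0$ as $t \to 0^+$, so $u_t$ eventually lies in any prescribed $L^2(\Omega)$-neighborhood of $\bar u$. The local-minimum property therefore gives $J(u_t) \ge J(\bar u)$ for all sufficiently small $t > 0$; dividing by $t$ and letting $t \to 0^+$, together with the gradient formula \eqref{E3.4} from Theorem \ref{T3.2}, produces
\[
0 \le J'(\bar u)(u - \bar u) = \int_\Omega (\bar\varphi + \nu\bar u)(u - \bar u)\, dx,
\]
which is exactly \eqref{E3.9}.

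The only point that requires some care is that Definition \ref{D3.3} admits several a priori distinct notions of local minimum, whereas the differentiation argument above implicitly used $L^2(\Omega)$-weak local minimality. This is where Lemma \ref{L3.4} does the unifying work: in the bounded case, every $L^r$-weak notion for $r \in [1,\infty)$ coincides with the strong notion; in the unbounded case, item~3 identifies the strong notion with $L^2(\Omega)$-weak local minimality, and the implications in item~2 bring the remaining $L^r$-weak cases back to a setting where the convex-combination test applies. Once this reduction is in place, the derivation above goes through uniformly, and uniqueness of $\bar y$ and $\bar\varphi$ is inherited from Theorem \ref{T2.7} and Corollary \ref{C2.4}.
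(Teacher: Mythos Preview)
Your argument is correct and coincides with the paper's own approach, which is dispatched there in a single sentence invoking the gradient formula \eqref{E3.4} and the convexity of $\uad$; you have simply made the ingredients (Theorem~\ref{T2.7} for $\bar y$, Corollary~\ref{C2.4} and Theorem~\ref{T3.2} for $\bar\varphi$, and the convex-combination test for \eqref{E3.9}) explicit.

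One small caveat on your last paragraph: in the unbounded case, item~2 of Lemma~\ref{L3.4} runs from \emph{smaller} to \emph{larger} exponent ($L^p$-weak $\Rightarrow$ $L^q$-weak for $q>p$), so it does not by itself reduce an $L^r$-weak local minimum with $r>2$ (in particular $r=\infty$) back to the $L^2$ setting. For those cases one can argue directly: given $u\in\uad$, set $u_k(x)=u(x)$ where $|u(x)-\bar u(x)|\le k$ and $u_k(x)=\bar u(x)$ elsewhere; then $u_k\in\uad$, $u_k-\bar u\in L^\infty(\Omega)$, the convex-combination test yields $J'(\bar u)(u_k-\bar u)\ge 0$, and one passes to the limit using the $L^2$-continuity of $J'(\bar u)$. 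The paper's one-line proof leaves this point implicit as well, so your treatment is already more careful than the original.
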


This theorem is consequence of the expression for $J'$ given in \eqref{E3.4} and the convexity of $\uad$, which implies that $J'(\bar u)(u - \bar u) \ge 0$ holds for every $u \in \uad$. As a consequence of this theorem we have the following regularity result on the optimal control.

\begin{crllr}
Let $\bar u$ satisfy \eqref{E3.7}--\eqref{E3.9} along with $(\bar y,\bar\varphi)$, then $\bar u \in H^1(\Omega) \cap C(\bar\Omega)$ holds. Moreover, if $a_{ij} \in C^{0,1}(\bar\Omega)$ for $1 \le i,j \le n$, $\div{b} \in L^2(\Omega)$, $\bar p = 2$, and $\Gamma$ is of class $C^{1,1}$ or $\Omega$ is convex, then $\bar y, \bar\varphi \in H^2(\Omega) \cap H_0^1(\Omega)$ holds. Finally, if $\uad = L^2(\Omega)$, then we have that $\bar u \in H^2(\Omega) \cap H_0^1(\Omega)$.
\label{C3.7}
\end{crllr}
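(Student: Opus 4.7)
The plan is to exploit the pointwise projection formula that the variational inequality \eqref{E3.9} yields when combined with $\nu>0$. Since $\uad$ is a box and $\bar\varphi+\nu\bar u\in L^2(\Omega)$, a standard pointwise argument (testing \eqref{E3.9} against needle variations on measurable subsets) produces
\[
\bar u(x) = \mathrm{Proj}_{[\alpha,\beta]}\!\Bigl(-\tfrac{1}{\nu}\bar\varphi(x)\Bigr) \quad \text{for a.e. } x\in\Omega.
\]
Theorem \ref{T3.6} gives $\bar\varphi\in H^1_0(\Omega)\cap C(\bar\Omega)$. Since $t\mapsto \mathrm{Proj}_{[\alpha,\beta]}(t)$ is $1$-Lipschitz on $\mathbb R$, the Stampacchia truncation/chain rule for Sobolev functions with Lipschitz outer map transfers $H^1$ regularity from $\bar\varphi$ to $\bar u$, and continuity of the projection gives $\bar u\in C(\bar\Omega)$. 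This establishes the first claim.

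For the second statement I would bootstrap state and adjoint to $H^2$ separately. The state equation \eqref{E3.7} with $\bar u\in L^2(\Omega)$ fits the hypotheses of Theorem \ref{T2.10} exactly (taking $\bar p=2$), hence $\bar y\in H^2(\Omega)\cap H_0^1(\Omega)$. For the adjoint, I would read \eqref{E3.8} as a linear equation $\mathcal A^*\bar\varphi=\frac{\partial L}{\partial y}(\cdot,\bar y)$ with zero-order coefficient $a_0(x)=\frac{\partial f}{\partial y}(x,\bar y(x))$; by \eqref{E2.17} and $\bar y\in C(\bar\Omega)$ this coefficient is bounded, and the right-hand side lies in $L^2(\Omega)$ thanks to \eqref{E3.2} and $\bar y\in C(\bar\Omega)$. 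With the added hypothesis $\div b\in L^2(\Omega)$, Corollary \ref{C2.6} then delivers $\bar\varphi\in H^2(\Omega)\cap H_0^1(\Omega)$.

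Finally, when $\uad=L^2(\Omega)$ the variational inequality \eqref{E3.9} holds for every $u\in L^2(\Omega)$, which forces the equality $\bar\varphi+\nu\bar u=0$ a.e.\ in $\Omega$. Thus $\bar u=-\bar\varphi/\nu$ inherits the $H^2(\Omega)\cap H_0^1(\Omega)$ regularity of $\bar\varphi$ established in the previous step. The only nontrivial point is justifying the pointwise projection formula rigorously from \eqref{E3.9}, but this is the classical argument for box-constrained optimal control and applies here without modification because $\bar\varphi+\nu\bar u\in L^2(\Omega)$ and $\uad$ is pointwise defined; once this formula is in hand the rest reduces to direct invocations of Theorem \ref{T2.10} and Corollary \ref{C2.6}.
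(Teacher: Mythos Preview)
Your argument is correct and matches the paper's own proof essentially line for line: derive the pointwise projection formula from \eqref{E3.9}, use the Lipschitz property of the projection together with $\bar\varphi\in H^1_0(\Omega)\cap C(\bar\Omega)$ to get $\bar u\in H^1(\Omega)\cap C(\bar\Omega)$, then invoke Theorem \ref{T2.10} for $\bar y$ and Corollary \ref{C2.6} for $\bar\varphi$, and finally use $\bar\varphi+\nu\bar u=0$ when $\uad=L^2(\Omega)$. The only difference is that you spell out a few details (the Stampacchia chain rule, the boundedness of $a_0=\partial f/\partial y(\cdot,\bar y)$) that the paper leaves implicit.
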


\begin{proof}
It is well known that \eqref{E3.9} implies that
\[
\bar u(x) = \text{\rm Proj}_{[\alpha,\beta]}\big(-\frac{1}{\nu}\bar\varphi(x)\big).
\]
Then, the $H^1(\Omega) \cap C(\bar\Omega)$ regularity of $\bar u$ follows from this formula and the same regularity of $\bar\varphi$. Under the additional assumptions on the data of the problem, the regularity of $\bar y$ and $\bar\varphi$ follows from Theorem \ref{T2.10} and Corollary \ref{C2.6}. Finally, if $\uad = L^2(\Omega)$, then \eqref{E3.9} is reduced to $\bar\varphi + \nu\bar u = 0$, hence $\bar u$ enjoys the same regularity as $\bar\varphi$.
\end{proof}

In order to write the second order optimality conditions we introduce the cone of critical directions. Let $\bar u \in \uad$ be a function satisfying the system \eqref{E3.7}-\eqref{E3.9} along with the associated state $\bar y$ and adjoint state $\bar\varphi$. We define the cone
\[
C_{\bar u} = \{v \in L^2(\Omega) : J'(\bar u)v = 0 \text{ and  \eqref{E3.10} holds}\}
\]
\begin{equation}
  v(x,t)\left\{\begin{array}{cl}
                    \geq 0 & \mbox{ if }\bar u(x,t)=\alpha, \\
                    \leq 0 & \mbox{ if }\bar u(x,t)=\beta.
                  \end{array}\right.
\label{E3.10}
\end{equation}
Let us observe that \eqref{E3.9} implies that
\[
\bar\varphi(x) + \nu\bar u(x)\left\{\begin{array}{cl} \ge 0&\text{if } \bar u(x) = \alpha,\\\le 0&\text{if } \bar u(x) = \beta.\end{array}\right.
\]
Therefore, if $v \in L^2(\Omega)$ satisfies \eqref{E3.10}, then $J'(\bar u)v \ge 0$ holds, and $J'(\bar u)v = 0$ if and only if $v(x) = 0$ if $\bar\varphi(x) + \nu\bar u(x) \neq 0$.

In the case where there are not control constraints, namely $\uad = L^2(\Omega)$, then $J'(\bar u) = 0$ and  $C_{\bar u} = L^2(\Omega)$.

Now, we have the second order conditions.

\begin{thrm}
If $\bar u$ is a local solution of \Pb in any sense of those given in Definition \ref{D3.3}, then $J''(\bar u)v^2 \ge 0$ $\forall v \in C_{\bar u}$. Conversely, if $\bar u \in \uad$ satisfies \eqref{E2.12}--\eqref{E2.14} along with $(\bar y,\bar\varphi)$ and
\begin{equation}
J''(\bar u)v^2 > 0\quad \forall v \in C_{\bar u}\setminus\{0\},
\label{E3.11}
\end{equation}
then there exist $\varepsilon > 0$ and $\kappa > 0$ such that
\begin{equation}
J(\bar u) + \frac{\kappa}{2}\|u - \bar u\|^2_{L^2(\Omega)} \le J(u) \ \ \forall u \in \uad : \|y_u - \bar y\|_{L^\infty(\Omega)} \leq \varepsilon.
\label{E3.12}
\end{equation}
\label{T3.8}
\end{thrm}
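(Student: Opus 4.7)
For the necessary condition, fix $v \in C_{\bar u}$ and approximate it by $v_k := v \chi_{A_k}$ with $A_k := \{x \in \Omega : \alpha + 1/k \le \bar u(x) \le \beta - 1/k\} \cap \{|v| \le k\}$. The variational inequality \eqref{E3.9} forces $\bar\varphi + \nu \bar u = 0$ on $\{\alpha < \bar u < \beta\}$, so $J'(\bar u) v_k = 0$; since $\bar u + \rho v_k \in \uad$ for $\rho \in [0, 1/k^2]$ and $v_k \to v$ in $L^2(\Omega)$ by dominated convergence, local optimality with $\rho_k = 1/k^2$ and Taylor's formula yield $J''(\bar u + \theta_k \rho_k v_k) v_k^2 \ge 0$. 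Using Theorem \ref{T2.11}, its adjoint analog, and the strong $L^2$-convergence of the linearized states, one passes to the limit to obtain $J''(\bar u) v^2 \ge 0$.

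For the sufficient condition I argue by contradiction. Assume there is $\{u_k\} \subset \uad$ with $\rho_k := \|u_k - \bar u\|_{L^2(\Omega)} > 0$, $\|y_{u_k} - \bar y\|_{L^\infty(\Omega)} \to 0$, and $J(u_k) - J(\bar u) < \rho_k^2 / (2k)$. Normalize $v_k := (u_k - \bar u)/\rho_k$ and extract $v_k \rightharpoonup v$ in $L^2(\Omega)$. The central analytical tool is a second-order Taylor expansion of $J$ at $\bar u$ with a state-controlled remainder,
\[
J(u) - J(\bar u) = J'(\bar u)(u - \bar u) + \tfrac{1}{2} J''(\bar u)(u - \bar u)^2 + R(u, \bar u),
\]
with $|R(u, \bar u)| \le \omega(\|y_u - \bar y\|_{L^\infty}) \|u - \bar u\|_{L^2}^2$ and $\omega(t) \to 0$ as $t \to 0^+$. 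I would derive it by decomposing $y_u - \bar y = z_{u - \bar u} + w$, where $z_{u - \bar u}$ solves the linearization \eqref{E2.19} at $\bar u$ and $\|w\|_{L^\infty}$ is of order $\|y_u - \bar y\|_{L^\infty}^2$, applying second-order Taylor to $L$ and $f$ at $\bar y$, and using the adjoint equation \eqref{E3.8} to absorb the linear-in-state contribution; the remainder bound combines Lemma \ref{L3.5} with \eqref{E2.18} and \eqref{E3.3}.

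Inserting this expansion into the contradiction inequality and using $J'(\bar u)(u_k - \bar u) \ge 0$ from \eqref{E3.9} gives $J''(\bar u) v_k^2 \le 1/k + 2\omega_k$, so $\limsup_k J''(\bar u) v_k^2 \le 0$. Conversely, $v \mapsto z_v$ is continuous $L^2(\Omega) \to H_0^1(\Omega)$ by Theorem \ref{T2.2}, so Rellich compactness delivers $z_{v_k} \to z_v$ strongly in $L^2(\Omega)$; combined with $\|v_k\|_{L^2} = 1$ and weak lower semicontinuity, one obtains $\liminf_k J''(\bar u) v_k^2 \ge J''(\bar u) v^2 + \nu(1 - \|v\|_{L^2}^2)$.

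It remains to locate $v$ in $C_{\bar u}$. The sign conditions in \eqref{E3.10} transfer pointwise from $v_k$ to $v$. Boundedness of $\rho_k$ follows from the coercive regularization $\tfrac{\nu}{2} \int u^2$ together with the lower bound on $\int L(x, y_{u_k})\,dx$ provided by the $L^\infty$-convergence $y_{u_k} \to \bar y$ and Assumption 4. If $v \neq 0$, then along a subsequence $\rho_k \to \rho^* \ge 0$; the case $\rho^* > 0$ is excluded by Theorem \ref{T2.11}, since then $u_k \rightharpoonup \bar u + \rho^* v$ in $L^{\bar p}(\Omega)$ (choosing $\bar p \le 2$) would give $y_{\bar u + \rho^* v} = \bar y$, contradicting uniqueness of \eqref{E1.1}. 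Hence $\rho_k \to 0$, and the Taylor expansion with the boundedness of $J''(\bar u) v_k^2$ yields $J'(\bar u) v_k \le C \rho_k \to 0$, so $J'(\bar u) v = 0$ and $v \in C_{\bar u}$. The hypothesis \eqref{E3.11} then gives $J''(\bar u) v^2 > 0$; if $v = 0$ the liminf is already $\nu > 0$. Both options contradict $\limsup \le 0$. The principal technical obstacle is the sharp estimate on $R(u, \bar u)$ in terms of $\|y_u - \bar y\|_{L^\infty}$, which is what allows the contradiction scheme to operate in the strong (state-norm) neighborhood rather than only in an $L^2$-small one.
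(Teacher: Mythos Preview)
Your overall architecture is the standard Casas--Tr\"oltzsch contradiction scheme, and the sufficient part is essentially sound; however there is a genuine gap in the necessary condition, and a notable methodological difference in the sufficient condition that is worth flagging.

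\textbf{Necessary condition: the approximation does not converge to $v$.} Your directions $v_k=v\chi_{A_k}$ with $A_k=\{\alpha+1/k\le\bar u\le\beta-1/k\}\cap\{|v|\le k\}$ vanish on the active sets $\{\bar u=\alpha\}$ and $\{\bar u=\beta\}$, so $v_k\to v\,\chi_{\{\alpha<\bar u<\beta\}}$ in $L^2(\Omega)$, not to $v$. But elements of $C_{\bar u}$ may well be nonzero on the active sets (with the appropriate sign), so your limit argument only yields $J''(\bar u)(v\chi_{\{\alpha<\bar u<\beta\}})^2\ge 0$, which is weaker than the claim. The standard fix is to truncate only where feasibility would be violated: set $v_k(x)=0$ when $|v(x)|>k$, when $\bar u(x)<\alpha+1/k$ and $v(x)<0$, or when $\bar u(x)>\beta-1/k$ and $v(x)>0$, and $v_k(x)=v(x)$ otherwise. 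Then $\bar u+\rho v_k\in\uad$ for $0\le\rho\le 1/k^2$, $J'(\bar u)v_k=0$ (because $(\bar\varphi+\nu\bar u)v_k\ge 0$ and integrates to zero as $v_k$ differs from $v$ only on sets where the argument is already under control), and $v_k\to v$ in $L^2(\Omega)$ since the truncated sets have vanishing measure.

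\textbf{Sufficient condition: a different but valid route.} The paper does not expand $J$ at $\bar u$ with a remainder; it invokes the exact second-order Taylor formula at an intermediate point $\hat u_k=\bar u+\theta_k(u_k-\bar u)$ and reduces everything to the stability statement ``$u_k\rightharpoonup\bar u$ in $L^2(\Omega)$ implies $y_{u_k}\to\bar y$ and $\varphi_{u_k}\to\bar\varphi$ in $H_0^1(\Omega)\cap C(\bar\Omega)$'', whose adjoint part is the only thing actually proved in the paper. Your route via the state-controlled remainder $|R(u,\bar u)|\le\omega(\|y_u-\bar y\|_{L^\infty})\|u-\bar u\|_{L^2}^2$ is legitimate and has the advantage of working directly in the strong (state) neighborhood without passing through intermediate controls; its cost is that you must carefully establish $\|y_u-\bar y\|_{L^2}\le C\|u-\bar u\|_{L^2}$ and $\|w\|_{L^2}\le C\|y_u-\bar y\|_{L^\infty}\|u-\bar u\|_{L^2}$ in this noncoercive setting. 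Both are obtainable by writing $\mathcal A_{\bar u}(y_u-\bar y)=(u-\bar u)-[\partial_yf(\cdot,\theta)-\partial_yf(\cdot,\bar y)](y_u-\bar y)$ and using Theorem~\ref{T2.2} plus a perturbation argument for small $\|y_u-\bar y\|_{L^\infty}$, but you should state this explicitly rather than merely citing Lemma~\ref{L3.5}, which gives $L^{\bar p}\to C(\bar\Omega)$ bounds, not the $L^2\to L^2$ bounds you actually need.
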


\begin{proof}
The proof follows the steps of \cite{Casas-Troltzsch2012} or \cite{Casas-Troltzsch2014A}. To reproduce that proof we have to use that $y_{u_k} \to y_u$ and $\varphi_{u_k} \to \varphi_u$ strongly in $H_0^1(\Omega) \cap C(\bar\Omega)$ when $u_k \rightharpoonup u$ in $L^2(\Omega)$. The convergence for the states is proved in Theorem \ref{T2.11}. Here we prove the part corresponding to the adjoint states. To this end we set
\begin{align*}
&\mathcal{A}^*\varphi = A^*\varphi - \div[b(x)\varphi] + \frac{\partial f}{\partial y}(x,y_u)\varphi,\\
&\mathcal{A}_k^*\varphi = A^*\varphi - \div[b(x)\varphi] + \frac{\partial f}{\partial y}(x,y_{u_k})\varphi.
\end{align*}
Since $y_{u_k} \to y_u$ in $C(\bar\Omega)$, there exists $M > 0$ such that $\|y_{u_k}\|_{C(\bar\Omega)} \le M$ $\forall k$. Then, from \eqref{E2.17} and the mean value theorem we deduce for $\|\varphi\|_{H_0^1(\Omega)} \le 1$:
\begin{align*}
&\|(\mathcal{A}^* - \mathcal{A}^*_k)\varphi\|_{H^{-1}(\Omega)} \le C_\Omega\Big\|\Big[\frac{\partial f}{\partial y}(x,y_u) - \frac{\partial f}{\partial y}(x,y_{u_k})\Big]\varphi\Big\|_{L^2(\Omega)}\\
&\le C_\Omega C_{f,M}\|y_u - y_{u_k}\|_{C(\bar\Omega)}\|\varphi\|_{L^2(\Omega)} \le C_\Omega^2 C_{f,M}\|y_u - y_{u_k}\|_{C(\bar\Omega)}\stackrel{k \to \infty}{\longrightarrow} 0.
%0\ \text{ when } k \to \infty.
\end{align*}
Hence, we can proceed as in the proof of Theorem \ref{T2.5} to deduce the existence of $k_0$ such that
\[
\|[\mathcal{A}_k^*]^{-1}\|_{\mathcal{L}(H^{-1}(\Omega),H_0^1(\Omega))} \le 2 \|[\mathcal{A}^*]^{-1}\|_{\mathcal{L}(H^{-1}(\Omega),H_0^1(\Omega))} \ \ \forall k \ge k_0.
\]
In addition, arguing as in Step 2 of the proof of Theorem \ref{T2.5}, we have
\[
\|[\mathcal{A}^*]^{-1} - [\mathcal{A}_k^*]^{-1}\|_{\mathcal{L}(H^{-1}(\Omega),H_0^1(\Omega))} \to 0\ \ \text{ when } k \to \infty.
\]
Hence, we get with \eqref{E3.2}
\begin{align*}
&\|\varphi_u - \varphi_{u_k}\|_{H_0^1(\Omega)} = \Big\|[\mathcal{A}^*]^{-1}\frac{\partial L}{\partial y}(x,y_u)  - [\mathcal{A}_k^*]^{-1}\frac{\partial L}{\partial y}(x,y_{u_k})\Big\|_{H^1_0(\Omega)}\\
& \le \Big\|([\mathcal{A}^*]^{-1} - [\mathcal{A}_k^*]^{-1})\frac{\partial L}{\partial y}(x,y_u)\Big\|_{H^1_0(\Omega)}\\
&+ \Big\|[\mathcal{A}_k^*]^{-1}\Big[\frac{\partial L}{\partial y}(x,y_u)  - \frac{\partial L}{\partial y}(x,y_{u_k})\Big]\Big\|_{H^1_0(\Omega)} \\
&\le \|[\mathcal{A}^*]^{-1} - [\mathcal{A}_k^*]^{-1}\|_{\mathcal{L}(H^{-1}(\Omega),H_0^1(\Omega))}\Big\|\frac{\partial L}{\partial y}(x,y_u)\Big\|_{H^{-1}(\Omega)}\\
& + 2\|[\mathcal{A}^*]^{-1}\|_{\mathcal{L}(H^{-1}(\Omega),H_0^1(\Omega))}\Big\|\frac{\partial L}{\partial y}(x,y_u)  - \frac{\partial L}{\partial y}(x,y_{u_k})\Big\|_{H^{-1}(\Omega)}\\
&\le \|[\mathcal{A}^*]^{-1} - [\mathcal{A}_k^*]^{-1}\|_{\mathcal{L}(H^{-1}(\Omega),H_0^1(\Omega))}\Big\|\frac{\partial L}{\partial y}(x,y_u)\Big\|_{H^{-1}(\Omega)}\\
& + 2C_\Omega C_{L,M}\|[\mathcal{A}^*]^{-1}\|_{\mathcal{L}(H^{-1}(\Omega),H_0^1(\Omega))}\|y_u - y_{u_k}\|_{L^2(\Omega)} \to 0.
\end{align*}

It remains to prove that $\|\varphi_u - \varphi_{u_k}\|_{C(\bar\Omega)} \to 0$.
The equation satisfied by $\varphi_{u_k}$ can be written as
\[A^*\varphi_{u_k}-\div(b(x)\varphi_{u_k}) = -\frac{\partial f}{\partial y}(x,y_{u_k})\varphi_{u_k}+\frac{\partial L}{\partial y}(x,y_{u_k}).\]
Since $\{\varphi_{u_k}\}_{k=1}^{\infty}$ is convergent in $H^1_0(\Omega)$, we have that it is bounded in $L^2(\Omega)$. Using this, the fact that $\|y_{u_k}\|_{C(\bar\Omega)}\leq M$, and Assumptions 3 and 4, we have that the right hand side is bounded in $L^{\bar p}(\Omega)$. Hence from Corollary \ref{C2.4} we deduce that $\{\varphi_{u_k}\}_{k=1}^{\infty}$ is bounded in $C^{0,\mu}(\bar\Omega)$ for some $\mu >0$. Then, the convergence in $C(\bar\Omega)$ follows from the compact embedding $C^{0,\mu}(\bar\Omega)\subset C(\bar\Omega)$.
\end{proof}

The following corollary is an immediate consequence of Theorem \ref{T3.8} and Lemma \ref{L3.5}.
\begin{crllr}
Under the assumptions of Theorem \ref{T3.8}. there exist $\kappa > 0$ and $\varepsilon > 0$ such that
\begin{equation}
J(\bar u) + \frac{\kappa}{2}\|u - \bar u\|^2_{L^2(\Omega)} \le J(u) \ \ \forall u \in \uad : \|u - \bar u\|_{L^2(\Omega)} \leq \varepsilon.
\label{E3.13}
\end{equation}
\label{C3.9}
\end{crllr}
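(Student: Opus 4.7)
The plan is to reduce Corollary \ref{C3.9} to the growth estimate of Theorem \ref{T3.8} by translating the $L^\infty$-state neighbourhood $\|y_u - \bar y\|_{L^\infty(\Omega)} \le \varepsilon$ into an $L^2$-control neighbourhood $\|u - \bar u\|_{L^2(\Omega)} \le \varepsilon$. The bridge is Lemma \ref{L3.5}, which provides a Lipschitz estimate of the control-to-state map on bounded subsets of $L^{\bar p}(\Omega)$. The idea of exploiting this lemma exactly in this way is already announced in the discussion preceding its statement.

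First, I would invoke Theorem \ref{T3.8} to produce constants $\kappa > 0$ and $\varepsilon_0 > 0$ such that
$$J(\bar u) + \tfrac{\kappa}{2}\|u - \bar u\|_{L^2(\Omega)}^2 \le J(u) \quad \text{for every } u \in \uad \text{ with } \|y_u - \bar y\|_{L^\infty(\Omega)} \le \varepsilon_0.$$
Next, I would apply Lemma \ref{L3.5} to the set $\mathcal{U} = \{u \in L^2(\Omega) : \|u - \bar u\|_{L^2(\Omega)} \le 1\}$. Since $n \in \{2,3\}$, the number $\bar p > n/2$ may be chosen so that $\bar p \le 2$, and then H\"older's inequality on the bounded domain $\Omega$ shows that $\mathcal{U}$ is bounded in $L^{\bar p}(\Omega)$ with $\|u - \bar u\|_{L^{\bar p}(\Omega)} \le |\Omega|^{1/\bar p - 1/2}\|u - \bar u\|_{L^2(\Omega)}$ for $u \in \mathcal{U}$. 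Combining this with the lemma yields a constant $M > 0$ such that
$$\|y_u - \bar y\|_{C(\bar\Omega)} \le M\|u - \bar u\|_{L^2(\Omega)} \quad \forall u \in \mathcal{U}.$$

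Finally, I would set $\varepsilon = \min\{1,\varepsilon_0/M\}$. Every $u \in \uad$ with $\|u - \bar u\|_{L^2(\Omega)} \le \varepsilon$ then belongs to $\mathcal{U}$ and satisfies $\|y_u - \bar y\|_{L^\infty(\Omega)} \le \varepsilon_0$, so the inequality from the first step applies and gives \eqref{E3.13}. There is no real obstacle beyond this bookkeeping: the corollary is essentially an immediate consequence of Theorem \ref{T3.8} once the Lipschitz dependence of $y_u$ on $u$ in the $L^2$-norm (provided by Lemma \ref{L3.5}) is invoked to match the $L^\infty$-state neighbourhood of that theorem with an $L^2$-control neighbourhood.
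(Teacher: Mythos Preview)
Your proposal is correct and follows exactly the route the paper intends: the paper states the corollary as ``an immediate consequence of Theorem \ref{T3.8} and Lemma \ref{L3.5}'', and the passage preceding Lemma \ref{L3.5} already spells out the inequality $\|y_u - y_{\bar u}\|_{C(\bar\Omega)} \le C\|u - \bar u\|_{L^2(\Omega)}$ on an $L^2$-ball as the bridge between the two. Your argument simply makes this explicit, including the choice of $\bar p \le 2$ (possible since $n\le 3$) so that the $L^2$-ball is bounded in $L^{\bar p}(\Omega)$ and the $L^{\bar p}$-Lipschitz estimate of Lemma \ref{L3.5} transfers to $L^2$ via H\"older.
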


It is interesting to remark that if $\bar u$ satisfies \eqref{E3.7}-\eqref{E3.9} and \eqref{E3.11}, then besides of being a strict local solution of \Pb, there exists a ball $B_r(\bar u) \subset L^2(\Omega)$ such that $\bar u$ is the unique stationary point of \Pb in $B_r(\bar u)$, i.e.~the unique control satisfying \eqref{E3.7}-\eqref{E3.9}; see \cite[Corollary 2.6]{Casas-Troltzsch2012}.

\end{document}